\title{Combinatorial solutions to the reflection equation}
\author{Agata Smoktunowicz}
\author{Leandro Vendramin}
\author{Robert Weston}
\address[A. Smoktunowicz]{
School of Mathematics, The University of Edinburgh, James Clerk Maxwell
Building, The Kings Buildings, Mayfield Road EH9 3JZ, Edinburgh}
\email{A.Smoktunowicz@ed.ac.uk}
\address[L. Vendramin]{IMAS--CONICET and Departamento de Matem\'atica, FCEN, Universidad de Buenos
Aires, Pabell\'on~1, Ciudad Universitaria, C1428EGA, Buenos Aires, Argentina, and NYU-ECNU Institute of Mathematical Sciences at NYU Shanghai, 3663 Zhongshan Road North, Shanghai, 200062, China}
\email{lvendramin@dm.uba.ar}
\address[R. Weston]{Dept of Mathematics, Heriot-Watt University,
Edinburgh EH14 4AS, UK, and The Maxwell Institute for Mathematical Sciences, Edinburgh}
\email{R.A.Weston@hw.ac.uk}
\newcommand{\Z}{\mathbb{Z}}
\newcommand{\C}{\mathbb{C}}
\newcommand{\Aut}{\operatorname{Aut}}
\newcommand{\id}{\mathrm{id}}
\newcommand{\Soc}{\mathrm{Soc}}
\newcommand{\mr}{\mathfrak{r}}
\newcommand{\mk}{\kappa}
\numberwithin{equation}{section}
\numberwithin{figure}{section}
\numberwithin{table}{section}
\newtheorem{thm}{Theorem}[section]
\newtheorem{lem}[thm]{Lemma}
\newtheorem{cor}[thm]{Corollary}
\newtheorem{pro}[thm]{Proposition}
\newtheorem{defn}[thm]{Definition}
\newtheorem{notation}[thm]{Notation}
\newtheorem{example}[thm]{Example}
\newtheorem{rem}[thm]{Remark}
\newtheorem{exa}[thm]{Example}
\begin{document}

\begin{abstract}
	We use ring-theoretic methods and methods from the theory of
	skew braces to produce set-theoretic solutions to the reflection equation. 
	We also use set-theoretic solutions to construct solutions of the parameter dependent reflection equation.
\end{abstract}

\maketitle

\section*{Introduction}

Following Drinfeld's suggestion in~\cite{MR1183474}, the study of set-theoretic
solutions to the Yang--Baxter equation (YBE) started in the seminal papers of
Etingof, Schedler and Soloviev~\cite{MR1722951} and Gateva--Ivanova and Van den
Bergh~\cite{MR1637256}. Since then, different aspects of this combinatorial
problem have been developed~\cite{tatiana,MR2885602,MR1769723,
MR2132760,MR1809284} and several interesting connections have been found.  In
pure mathematics, some of these connections are with braid and Garside
groups~\cite{MR2764830,MR3374524}, (semi)groups of
I-type~\cite{MR1637256,MR2301033}, matched pairs of
groups~\cite{MR1769723,MR2024436}, Artin-Schelter regular
algebras~\cite{MR2927367}, Jacobson radical rings and
generalizations~\cite{MR3177933,MR2278047}, regular subgroups and Hopf--Galois
extensions~\cite{MR3763907}, affine manifolds~\cite{MR3291816},
orderability~\cite{MR3815290,MR3572046} and factorizable
groups~\cite{MR1178147}. In mathematical physics, the connections include Yang--Baxter maps
\cite{MR1995883}, discrete integrable systems, cellular automata, crystals and tropical geometry 
(see \cite{MR2892302} and references therein). As a result
of all these relationships, there has been intensive study of set-theoretic
solutions to the quantum Yang--Baxter equation. 

Recall that a pair $(X,r)$, where $X$ is a set and $r\colon X\times X\to
X\times X$ is a bijective map, is a set-theoretic solution to the quantum Yang--Baxter
equation if\footnote{In the physics literature this $r$ is usually denoted by $\check{R}$.}
\begin{align*}
	(r\times\id)(\id\times r)(r\times\id)=(\id\times r)(r\times\id)(\id\times r).
\end{align*}
The solution $(X,r)$ is said to be non-degenerate if it is possible to write 
\[
r(x,y)=(\sigma_x(y),\tau_y(x))
\]
for bijective maps $\sigma_x,\tau_x\colon X\to X$ for all $x\in X$. As usual,
the solution $(X,r)$ will be called involutive if $r^2=\id_{X\times X}$. 

If $R$ is an associative ring, the operation $x\circ y=x+y+xy$ is always associative with
neutral element $0_R$. In the case when this operation $\circ$ turns $R$ into a group, then $R$
is a Jacobson radical ring~\cite{MR0012271}. As it was observed by
Rump~\cite{MR2278047}, Jacobson radical rings produce highly
non-trivial set-theoretic solutions to the Yang--Baxter equation. More precisely, if $R$ is a Jacobson 
radical ring, then 
\begin{align*}
	&r\colon R\times R\to R\times R,
	&&r(x,y)=(xy+y,(xy+y)'xy),
\end{align*}
is a non-degenerate involutive solution to the
Yang--Baxter equation, 
where $(xy+y)'$ denotes the inverse of the element $xy+y$ with respect to the
Jacobson circle operation $\circ$.

Rump observed that Jacobson radical rings can be generalized to braces. With
braces one produces non-degenerate involutive solutions very similar to those
coming from radical rings. These new solutions are universal in
the sense that each non-degenerate involutive solution is isomorphic to the 
restriction of a solution constructed from a brace. To study non-involutive
solutions one replaces braces by skew braces~\cite{MR3647970}. 
Skew braces still share several
properties with braces and of course with Jacobson radical rings, so 
techniques and tools from ring theory are available to study arbitrary
set-theoretic solutions.  

Recently, there has been considerable interest in the reflection equation, which
first appeared in the study of quantum scattering on the half-line by
Cherednik~\cite{MR774205}. The role of parameter dependent solutions to the reflection equation in the description of quantum-integrable systems with open boundaries was formulated by Sklyanin \cite{MR953215}.
Just as for the theory of the Yang--Baxter equation,
it also turns out to be interesting to study a combinatorial version of the
reflection equation.
For a map $k:X\rightarrow X$, and $(X,r)$ as above, this combinatorial reflection equation is \begin{equation*}
		r(\id\times k)r(\id\times k)=(\id\times k)r(\id\times k)r.
	\end{equation*}
This set-theoretical equation together with the first examples of solutions first 
appeared in the work of Caudrelier and Zhang~\cite{MR3207925}. A more systematic study and a classification in a slightly different setting (i.e. not inspired by soliton interactions but by maps appearing in integrable discrete systems) appeared in~\cite{MR3030177}.
Other solutions were also considered and used by Kuniba, Okado and Yamada within the context of cellular automata \cite{kuniba05}.

In this work we use ring-theoretic methods, and more generally methods coming
from the theory of braces, to produce families of new solutions to the
reflection equation. Our purely combinatorial approach is
different to that of~\cite{DeCommer}, where actions of skew braces are used to
produce reflections.

\smallskip
The paper is organized as follows. In Section~\ref{preliminaries} we give the
main definitions and examples and then prove that for finite, non-degenerate, involutive set-theoretic solutions to
the YBE one only needs to check one of the coordinates to prove that a certain
map is a reflection.  Theorem~\ref{thm:invariant} shows that each map invariant
under the action of the permutation group of a finite non-degenerate involutive solution yields a reflection;
this result easily produces several reflections.  
Section~\ref{braces} contains several reflections constructed from the theory of left braces
and Jacobson radical rings. 
Section~\ref{factorization} explores  
reflections associated with the solutions of the YBE  constructed by Weinstein and Xu from factorizable groups. 
In Section~\ref{paramdepsolns} we show a way of introducing parameter dependence
in both $r$ and $k$ to yield solutions of the respective parameter dependent
quantum Yang--Baxter and reflection equations.

\section{Preliminaries}
\label{preliminaries}

A set-theoretic solution to the YBE is a pair $(X,r)$, where $X$ is a set and
$r\colon X\times X\to X\times X$ is a bijective map such that
\[
	r_1r_2r_1=r_2r_1r_2,
\]
where $r_1=r\times\id$ and $r_2=\id\times r$. 
By convention, we write
\[
	r(x,y)=(\sigma_x(y),\tau_y(x)).
\]
If the solutions is said to be non-degenerate, then $\sigma_x,\tau_x\colon X\to
X$ are assumed to be bijective. The solution $(X,r)$ is finite if $X$ is finite
and it is involutive if $r^2=\id$.

\begin{rem}
	\label{rem:involutive}
	If $(X,r)$ is a non-degenerate involutive solution, then
	\[
		\sigma_{\sigma_x(y)}(\tau_y(x))=x,\quad
		\tau_{\tau_y(x)}(\sigma_x(y))=y.
	\]
	for all $x,y\in X$. In particular, if furthermore $(X,r)$ is non-degenerate, then
	\[
		\tau_y(x)=\sigma^{-1}_{\sigma_x(y)}(x),\quad
		\sigma_x(y)=\tau^{-1}_{\tau_y(x)}(y).
	\]
	for all $x,y\in X$.
\end{rem}

Let us first recall the basic definitions from~\cite{MR3030177}.

\begin{defn}
	Let $(X,r)$ be a non-degenerate solution. We say that 
	a map $k\colon X\to X$ is 
	a \emph{reflection} of $(X,r)$ if
	\[
		r(\id\times k)r(\id\times k)=(\id\times k)r(\id\times k)r.
	\]
	The reflection $k$ is said to be \emph{involutive} if $k^2=\id$.
\end{defn}

If $X=\{1,\dots,n\}$ and $k\colon X\to X$, we use the one-line notation for $k$
which is simply the string $k(1)k(2)\cdots k(n)$. For example, the identity
would be $\id=123\cdots n$.

\begin{example}
	\label{exa:3,2}
	Let $X=\{1,2,3\}$ and $r(x,y)=(\varphi_x(y),\varphi_y(x))$, where
	$\varphi_1=\varphi_2=\id$ and $\varphi_3=(12)$. There are five 
	reflection maps:
	\[
		k_1=123=\id,\quad
		k_2=113,\quad
		k_3=213,\quad
		k_4=223,\quad
		k_5=333.
	\]
	Moreover, $k_j$ is involutive if and only if $j\in\{1,3\}$.
\end{example}

\begin{example}
	Let $X$ be a set and $\sigma,\tau\colon X\to X$ be permutations such that
	$\sigma\tau=\tau\sigma$.  Then $(X,r)$, where $r(x,y)=(\sigma(y),\tau(x))$
	is a solution to the YBE. Each map $k\colon X\to X$ that commutes with
	$\sigma\tau$ is a reflection map.
\end{example}

\begin{example}
	\label{exa:4,13}
	Let $X=\{1,2,3,4\}$ and $r(x,y)=(\sigma_x(y),\tau_y(x))$, where
	\begin{align*}
		\sigma_1=(34), &&\sigma_2=(1324), &&\sigma_3=(1423), &&\sigma_4=(12),\\
		\tau_1=(24),&&\tau_2=(1423),&&\tau_3=(1234),&&\tau_4=(13).
	\end{align*}
	There are ten reflection maps: 
	\begin{align*}
		& k_1=1144, && k_2=1212, && k_3=1234, && k_4=1331, && k_5=2143,\\
		& k_6=2233, && k_7=3412, && k_8=3434, && k_9=4224, && k_{10}=4321.
	\end{align*}
	Moreover, $k_j$ is involutive if and only if $j\in\{3,5,7,10\}$.
\end{example}

\begin{notation}
	Let $(X,r)$ be a solution. 
	For each $x,y\in X$ let 
	\begin{align*}
		t(x,y)=\sigma_{\sigma_x(y)}k(\tau_y(x)), && 
		u(x,y)=\tau_{k(\tau_y(x))}\sigma_x(y).
	\end{align*}
\end{notation}

\begin{lem}
	\label{lem:basic}
	Let $(X,r)$ be a non-degenerate solution to the YBE. The map $k\colon X\to X$ 
	is a reflection of $(X,r)$ if and only if
	\begin{align*}
		t(x,k(y))=t(x,y) \quad \text{and} \quad 
		u(x,k(y))=k(u(x,y))
	\end{align*}
	for all $x,y\in X$.
\end{lem}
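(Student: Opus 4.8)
The plan is to reduce the reflection equation to a single commutation statement for the map $r(\id\times k)r$ and then read off the two conditions coordinatewise. First I would compute $r(\id\times k)r$ explicitly from the formula $r(x,y)=(\sigma_x(y),\tau_y(x))$. Applying $\id\times k$ gives $(\id\times k)r(x,y)=(\sigma_x(y),k(\tau_y(x)))$, and applying $r$ once more yields
\[
	r(\id\times k)r(x,y)=\bigl(\sigma_{\sigma_x(y)}k(\tau_y(x)),\,\tau_{k(\tau_y(x))}\sigma_x(y)\bigr)=(t(x,y),u(x,y)),
\]
which is exactly the definition of the maps $t$ and $u$. Here non-degeneracy is used only to guarantee that the $\sigma_x,\tau_y$ are well-defined bijections; the identity itself is purely formal.

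Next I would observe that both sides of the reflection equation contain the common block $r(\id\times k)r$. Grouping the first three factors on the left and the last three on the right, the reflection equation $r(\id\times k)r(\id\times k)=(\id\times k)r(\id\times k)r$ becomes
\[
	[r(\id\times k)r](\id\times k)=(\id\times k)[r(\id\times k)r].
\]
Thus, writing $F=r(\id\times k)r$, the assertion that $k$ is a reflection is equivalent to the commutation relation $F(\id\times k)=(\id\times k)F$.

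Finally I would evaluate this relation at an arbitrary point $(x,y)$ using the formula for $F$ obtained in the first step. On one hand,
\[
	F(\id\times k)(x,y)=F(x,k(y))=(t(x,k(y)),u(x,k(y))),
\]
while on the other hand
\[
	(\id\times k)F(x,y)=(\id\times k)(t(x,y),u(x,y))=(t(x,y),k(u(x,y))).
\]
Since two maps into $X\times X$ coincide precisely when their two coordinate functions coincide, the identity $F(\id\times k)=(\id\times k)F$ holds for all $(x,y)$ exactly when $t(x,k(y))=t(x,y)$ and $u(x,k(y))=k(u(x,y))$ for all $x,y$, which is the claim.

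The argument is entirely formal, so there is no genuine obstacle; the only point demanding care is the bookkeeping of the composition. Specifically, one must recognise that the block $r(\id\times k)r$ appearing in both sides is precisely the map with coordinates $t$ and $u$, so that the four-fold composition of the reflection equation collapses to the single commutation relation $F(\id\times k)=(\id\times k)F$ rather than producing a more complicated expression.
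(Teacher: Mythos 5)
Your proof is correct and is precisely the computation the paper leaves implicit (its proof of Lemma~\ref{lem:basic} reads only ``It is straightforward''): identifying $r(\id\times k)r(x,y)=(t(x,y),u(x,y))$, regrouping the reflection equation as the commutation $F(\id\times k)=(\id\times k)F$ with $F=r(\id\times k)r$, and comparing coordinates. No issues; this is the intended argument, written out in full.
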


\begin{proof}
	It is straightforward.
\end{proof}

The following theorem shows that for involutive solutions one needs to check
only one of the formulas of Lemma~\ref{lem:basic}. 

\begin{thm}	
	\label{thm:t}
	Let $(X,r)$ be a non-degenerate involutive solution to the YBE and let
	$k\colon X\to X$ be a map. Then $k$ is a reflection of $(X,r)$ if and only
	if 
	\begin{equation}
		\label{eq:t}
		t(x,y)=t(x,k(y))
	\end{equation}
	for all $x,y,\in X$.
\end{thm}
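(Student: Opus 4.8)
The plan is to deduce the second defining identity of Lemma~\ref{lem:basic} from the first, using only involutivity and non-degeneracy. Write $L=r(\id\times k)r(\id\times k)$ and $R=(\id\times k)r(\id\times k)r$ for the two sides of the reflection equation, so that $k$ is a reflection precisely when $L=R$. Reading off coordinates exactly as in the proof of Lemma~\ref{lem:basic}, one has $L(x,y)=(t(x,k(y)),u(x,k(y)))$ and $R(x,y)=(t(x,y),k(u(x,y)))$. Thus the hypothesis \eqref{eq:t} says precisely that $L$ and $R$ have the same first coordinate at every point, and the content of the theorem is that for involutive solutions this already forces the second coordinates to agree. The direction ``reflection $\Rightarrow$ \eqref{eq:t}'' is immediate from Lemma~\ref{lem:basic}, so only the converse requires work.

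The main idea is to transport the first-coordinate hypothesis through $r$. First I would record the operator identity $L\,r=r\,R$, which is immediate by cancelling a trailing $r$ (both sides equal $r(\id\times k)r(\id\times k)r$) and holds for any $r$ and $k$. Since $(X,r)$ is involutive, $r^{-1}=r$, so this rearranges to $L=rRr$ and $R=rLr$. Now fix $(x,y)$ and set $(a,b)=r(x,y)$. Evaluating $L=rRr$ gives $L(x,y)=r\bigl(R(a,b)\bigr)=r\bigl(t(a,b),k(u(a,b))\bigr)$, whose first coordinate is $\sigma_{t(a,b)}\bigl(k(u(a,b))\bigr)$; evaluating $R=rLr$ gives $R(x,y)=r\bigl(L(a,b)\bigr)=r\bigl(t(a,k(b)),u(a,k(b))\bigr)$, whose first coordinate is $\sigma_{t(a,k(b))}\bigl(u(a,k(b))\bigr)$. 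Applying the hypothesis \eqref{eq:t} at the point $(a,b)$ replaces $t(a,k(b))$ by $t(a,b)$, so these two first coordinates become $\sigma_{t(a,b)}(k(u(a,b)))$ and $\sigma_{t(a,b)}(u(a,k(b)))$, respectively.

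The final step is a cancellation. Applying the hypothesis \eqref{eq:t} at the point $(x,y)$ says exactly that the first coordinates of $L(x,y)$ and $R(x,y)$ coincide, whence $\sigma_{t(a,b)}(k(u(a,b)))=\sigma_{t(a,b)}(u(a,k(b)))$. Because the solution is non-degenerate, $\sigma_{t(a,b)}$ is injective, so $k(u(a,b))=u(a,k(b))$; this is precisely the second identity of Lemma~\ref{lem:basic} at the point $(a,b)$. Since $r$ is a bijection, $(a,b)=r(x,y)$ ranges over all of $X\times X$ as $(x,y)$ does, so the second identity holds everywhere, and Lemma~\ref{lem:basic} then yields that $k$ is a reflection. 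The one conceptual hurdle is spotting that the relation $L\,r=r\,R$ together with $r^{-1}=r$ converts the first-coordinate hypothesis at $r(x,y)$ into the desired second-coordinate conclusion at $r(x,y)$; after that the only input is injectivity of the maps $\sigma_z$, and notably no finiteness of $X$ is needed.
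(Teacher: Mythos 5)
Your proof is correct. The paper establishes the missing identity $u(x,k(y))=k(u(x,y))$ pointwise: it introduces auxiliary elements $a=\sigma_x(y)$, $b=\tau_y(x)$, $d=\sigma_a(k(b))$, $e=k(\tau_{k(b)}(a))$, and later $f,g$, and shows — using involutivity twice, the hypothesis \eqref{eq:t} at both $(a,b)$ and $(x,y)$, and non-degeneracy — that $k(u(x,y))$ and $u(x,k(y))$ both equal $\sigma_d^{-1}\sigma_x(k(y))$. You instead organize the argument around the a priori operator identity $Lr=rR$ (both sides being $r(\id\times k)r(\id\times k)r$), which involutivity converts into $L=rRr$ and $R=rLr$; comparing the first coordinates of $L(x,y)$ and $R(x,y)$ computed both directly and through these conjugations, and then cancelling the bijection $\sigma_{t(a,b)}$, yields the second condition of Lemma~\ref{lem:basic} at the point $(a,b)=r(x,y)$, and bijectivity of $r$ transfers it to all of $X\times X$. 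When unwound, the two computations coincide: both use \eqref{eq:t} exactly at $(x,y)$ and at $r(x,y)$, involutivity of $r$, and injectivity of a single $\sigma_z$; your identity $\sigma_{t(a,b)}(k(u(a,b)))=t(x,k(y))$ is the paper's $\sigma_d(e)=t(a,k(b))$ with the roles of $(x,y)$ and $r(x,y)$ exchanged. What your packaging buys is transparency — the roles of the two hypotheses are isolated ($r^2=\id$ to conjugate, non-degeneracy to cancel), no auxiliary elements beyond $(a,b)$ are needed, and the formal identity $Lr=rR$ explains from the outset why only the first coordinate matters; the only price is the final (trivial) surjectivity step, since you obtain the identity at $r(x,y)$ rather than at $(x,y)$ directly.
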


\begin{proof}
	One of the implications follows directly from Lemma~\ref{lem:basic}. Let us
	then assume that~\eqref{eq:t} holds for all $x,y\in X$. Let $x,y\in X$. We
	need to prove that 
	$u(x,k(y))=k(u(x,y))$. 
	Let 
	\begin{align}
		\label{eq:abde}
		a = \sigma_x(y), && b=\tau_y(x), && 
		d = \sigma_a(k(b)), && e=k(\tau_{k(b)}(a)).
	\end{align}
	A straightforward calculation shows that $\sigma_d(e)=t(a,k(b))$. Since
	$(X,r)$ is involutive, $\sigma_a(b)=x$ and $\tau_b(a)=y$. These facts and
	Equality~\eqref{eq:t} imply that 
	\[
		\sigma_d(e)=t(a,k(b))=t(a,b)=\sigma_x(k(y)).
	\]
	Since $(X,r)$ is non-degenerate, 
	$e=\sigma^{-1}_d\sigma_x(k(y))$ and hence 
	\[
		k(u(x,y))=k\left(\tau_{k(\tau_y(x))}\sigma_x(y)\right)=\sigma^{-1}_d\sigma_x(k(y))
	\]
	by~\eqref{eq:abde} and the definition of $u(x,y)$.
	Now let 
	\[
		g=u(x,k(y))=\tau_{k(\tau_y(x))}\sigma_x(k(y)),\quad
		f=\sigma_{\sigma_x(k(y))}k(\tau_{k(y)}(x)).
	\]
	Applying $r^2=\id$ to the pair $\left(\sigma_x(k(y)),k(\tau_y(x))\right)$ one gets 
	\[
		\sigma_f(g)=\sigma_x(k(y)),\quad
		\tau_g(f)=k(\tau_y(x)).
	\]
	Since $d=t(x,y)=t(x,k(y))=f$, it follows that 
	\[
		u(x,k(y))=g=\sigma^{-1}_f\sigma_x(k(y))=\sigma^{-1}_d\sigma_x(k(y)).
	\]
	Then Lemma~\ref{lem:basic} implies the claim.
\end{proof}

The \emph{involutive Yang--Baxter group} of an involutive
non-degenerate solution $(X,r)$ is the group $\mathcal{G}(X,r)$ generated by
$\{\sigma_x:x\in X\}$. Clearly this group acts on $X$ by evaluation. A map
$k\colon X\to X$ is said to be $\mathcal{G}(X,r)$-equivariant if $k(gx)=gk(x)$
for all $g\in\mathcal{G}(X,r)$ and $x\in X$. Thus $k$ is
$\mathcal{G}(X,r)$-equivariant if and only if $k\sigma_x=\sigma_xk$ for
all $x\in X$.

\begin{thm}
	\label{thm:invariant}
	Let $(X,r)$ be an involutive non-degenerate solution to the YBE. Each 
	$\mathcal{G}(X,r)$-equivariant map $k\colon X\to X$ is a
	reflection of $(X,r)$.
\end{thm}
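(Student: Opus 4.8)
The plan is to invoke Theorem~\ref{thm:t}, so that proving $k$ is a reflection reduces to verifying the single identity $t(x,y)=t(x,k(y))$ for all $x,y\in X$. My strategy is to establish something stronger and cleaner: that $\mathcal{G}(X,r)$-equivariance forces $t(x,y)$ to be independent of its second argument, with $t(x,y)=k(x)$ for every $y$. Once this is in place, replacing $y$ by $k(y)$ gives $t(x,k(y))=k(x)=t(x,y)$ at once, and Theorem~\ref{thm:t} closes the argument.

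First I would rewrite $t(x,y)=\sigma_{\sigma_x(y)}k(\tau_y(x))$ using the formula from Remark~\ref{rem:involutive}, namely $\tau_y(x)=\sigma^{-1}_{\sigma_x(y)}(x)$, which is available precisely because $(X,r)$ is non-degenerate and involutive. This yields $t(x,y)=\sigma_{\sigma_x(y)}\,k\bigl(\sigma^{-1}_{\sigma_x(y)}(x)\bigr)$, an expression involving only $\sigma$-maps together with a single application of $k$.

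The key step is then to exploit equivariance. Since $k\sigma_z=\sigma_z k$ for all $z\in X$, the map $k$ commutes with every element of $\mathcal{G}(X,r)$, and in particular with each inverse $\sigma_z^{-1}$. Applying this with $z=\sigma_x(y)$ lets me pull $k$ through the inner map, $k\bigl(\sigma^{-1}_{\sigma_x(y)}(x)\bigr)=\sigma^{-1}_{\sigma_x(y)}\bigl(k(x)\bigr)$. Substituting back, the outer $\sigma_{\sigma_x(y)}$ cancels the inner $\sigma^{-1}_{\sigma_x(y)}$, leaving $t(x,y)=k(x)$ with all dependence on $y$ gone.

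I do not expect a genuine obstacle here; the heart of the proof is essentially a one-line computation once the two reductions above are set up. The only point requiring a moment's care is that equivariance, stated as commuting with the generators $\sigma_x$, upgrades to commuting with their inverses (and hence with all of $\mathcal{G}(X,r)$)—but this is automatic, since a bijection and its inverse commute with exactly the same maps. With $t(x,y)=k(x)$ established, condition~\eqref{eq:t} holds trivially and the theorem follows from Theorem~\ref{thm:t}.
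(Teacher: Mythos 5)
Your proof is correct and takes essentially the same route as the paper: both arguments use $\mathcal{G}(X,r)$-equivariance together with the involutivity identities of Remark~\ref{rem:involutive} to show that $t(x,y)=k(x)$ independently of the second argument, and then conclude via Theorem~\ref{thm:t}. The only cosmetic difference is that you pull $k$ through $\sigma^{-1}_{\sigma_x(y)}$ using the inverse form $\tau_y(x)=\sigma^{-1}_{\sigma_x(y)}(x)$, whereas the paper pulls $k$ outside via $\sigma_{\sigma_x(y)}(\tau_y(x))=x$ and runs the computation separately for $y$ and for $k(y)$.
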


\begin{proof}
	Let $x,y\in X$ and $u=\sigma_x(k(y))$ and $v=\tau_{k(y)}(x)$.
	Using Remark~\ref{rem:involutive} and that $k$ is
	$\mathcal{G}(X,r)$-equivariant, we write
	\begin{align*}
		rk_2rk_2(x,y)
		=(\sigma_u(k(v)),\beta)
		=(k(\sigma_u(v)),\beta)
		=(k(x),\beta)
	\end{align*}
	for some $\beta\in X$. Similarly, we write 
	\begin{align*}
		k_2rk_2r(x,y)&=k_2rk_2(\sigma_x(y),\tau_y(x))
		=(\sigma_{\sigma_x(y)}k(\tau_y(x)),\gamma)
		=(k(x),\gamma)
	\end{align*}
	for some $\gamma\in X$. Hence the claim follows from Theorem~\ref{thm:t}.
\end{proof}


The converse of Theorem~\ref{thm:invariant} does not hold:  

\begin{example}
	\label{exa:4,19}
	Let $X=\{1,2,3,4\}$ and $r(x,y)=(\sigma_x(y),\tau_y(x))$, where 
	\begin{align*}
		\sigma_1=(12),&&\sigma_2=(1324),&&\sigma_3=(34),&&\sigma_4=(1423),\\
		\tau_1=(14),&&\tau_2=(1243),&&\tau_3=(23),&&\tau_4=(1342).
	\end{align*}
	There are then ten reflection maps:
	\begin{align*}
		& k_1=1133, && k_2=1221, && k_3=1234, && k_4=1414, && k_5=2143,\\
		&k_6=2244,&& k_7=3232, && k_8=3412, && k_9=4321, && k_{10}=4334.
	\end{align*}
	A direct calculation shows that $\mathcal{G}(X,r)$ is isomorphic to the
	dihedral group of eight elements. Moreover, $k_j$ is
	$\mathcal{G}(X,r)$-equivariant if and only if $j\in\{3,5\}$.
\end{example}

\section{Reflections and left braces}

A left brace is a triple $(A,+,\circ)$ such that $(A,+)$ is an abelian group,
$(A,\circ)$ is a group and 
\[
	a\circ (b+c)=a\circ b-a+a\circ c
\]
holds for all $a,b,c\in A$. If $A$ is a left brace, the multiplicative group
$(A,\circ)$ of $A$ acts by automorphism on the additive group $(A,+)$, i.e.
the map $\lambda\colon (A,\circ)\to\Aut(A,+)$ given by $a\mapsto \lambda_a$,
where $\lambda_a(b)=-a+a\circ b$, is a group homomorphism. For $a,b\in A$ 
one defines the operation
\[
	a*b=\lambda_a(b)-b=-a+a\circ b-b.
\]
Left braces produce
solutions to the YBE. The map 
\begin{gather}
	r_A\colon A\times A\to A\times A,
	\quad
	r_A(a,b)=(\lambda_a(b),\mu_b(a)),\label{eq:rdef}
	\shortintertext{where}
	\mu_b(a)=\lambda_a(b)'\circ a\circ b=\lambda_a(b)'*a+a,
\nonumber\end{gather}
where $x'$ denotes the inverse of the element $x$ with respect to the circle
operation, is an involutive non-degenerate solution to the YBE. 
We refer to~\cite{MR3824447} for an introduction to the theory of left braces.

A left ideal of a left brace $A$ is a subgroup $X$ of the additive group of $A$
such that $A*X\subseteq X$. An ideal $I$ of $A$ is a left ideal $I$ of $A$ such
that $a+I=I+a$ and $a\circ I=I\circ a$ for all $a\in A$.  The \emph{socle} of a
left brace $A$ is defined as 
\[
	\Soc(A)=\{a\in A:\lambda_a=\id\}=\{a\in A:a+b=a\circ b\text{ for all $b\in A$}\}
\]
and it is an ideal of $A$.

\begin{thm}
	\label{thm:general}
	Let $A$ be a left brace, $X\subseteq A$ be a subset such that the restriction 
	$r=r_A|_{X\times X}$ is a solution $(X,r)$ to the YBE and  $g\colon X\to Y$ be a
	map for some subset $Y\subseteq A$. Assume that there exists a map $X\times Y\to X$, $(x,y)\mapsto x\odot
	y$, such that
	\begin{equation}
		\label{eq:odot}
		\lambda_x(y\odot g(z))=\lambda_x(y)\odot g(z)
	\end{equation}
	for all $x,y,z\in X$. Let $f\colon X\to X$ be  
	$\mathcal{G}(X,r)$-equivariant and $k\colon X\to X$ be given by
	$k(x)=f(x)\odot g(x)$. Then $k$ is a reflection of $(X,r)$ if and only if 
	\begin{equation*}
		f(x)\odot g(\mu_{k(y)}(x))=f(x)\odot g(\mu_y(x))
	\end{equation*}
	for all $x,y\in X$.
\end{thm}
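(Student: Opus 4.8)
The plan is to reduce the statement to the criterion of Theorem~\ref{thm:t}. Since $r_A$ is an involutive non-degenerate solution and $r$ is its restriction to $X\times X$, the pair $(X,r)$ inherits involutivity and non-degeneracy, so Theorem~\ref{thm:t} applies and $k$ is a reflection if and only if $t(x,y)=t(x,k(y))$ for all $x,y\in X$. For the brace solution one has $\sigma_x=\lambda_x$ and $\tau_y=\mu_y$, so $t(x,y)=\lambda_{\lambda_x(y)}k(\mu_y(x))$. The whole proof then amounts to rewriting $t(x,y)$ and $t(x,k(y))$ until the equivalence with the displayed identity becomes transparent.

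First I would expand $t(x,y)$. Writing $k(w)=f(w)\odot g(w)$ with $w=\mu_y(x)$ gives $t(x,y)=\lambda_{\lambda_x(y)}\bigl(f(\mu_y(x))\odot g(\mu_y(x))\bigr)$. Because $r$ maps $X\times X$ into itself, $\mu_y(x)=\tau_y(x)\in X$, so $g(\mu_y(x))$ is a legitimate second argument of $\odot$ and hypothesis~\eqref{eq:odot} may be invoked with $\lambda_x(y)\in X$ as the index and $f(\mu_y(x))\in X$ as the first argument, pulling $g(\mu_y(x))$ outside:
\[
	t(x,y)=\lambda_{\lambda_x(y)}\bigl(f(\mu_y(x))\bigr)\odot g(\mu_y(x)).
\]
Now I would read $\lambda_{\lambda_x(y)}$ as $\sigma_{\sigma_x(y)}$ and use the $\mathcal{G}(X,r)$-equivariance of $f$ to commute it past $f$, so that $\sigma_{\sigma_x(y)}(f(\tau_y(x)))=f\bigl(\sigma_{\sigma_x(y)}(\tau_y(x))\bigr)$; by the involutivity identity $\sigma_{\sigma_x(y)}(\tau_y(x))=x$ of Remark~\ref{rem:involutive} the first factor collapses to $f(x)$, giving $t(x,y)=f(x)\odot g(\mu_y(x))$.

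Running the identical computation with $y$ replaced by $k(y)$---noting that $k(y)\in X$, so $\mu_{k(y)}(x)=\tau_{k(y)}(x)\in X$ and the same two steps apply verbatim to the pair $(x,k(y))$---yields $t(x,k(y))=f(x)\odot g(\mu_{k(y)}(x))$. Hence the reflection criterion $t(x,y)=t(x,k(y))$ is precisely $f(x)\odot g(\mu_y(x))=f(x)\odot g(\mu_{k(y)}(x))$ for all $x,y\in X$, which is the asserted condition. The only delicate points are bookkeeping: verifying that all the intermediate arguments ($\tau_y(x)$, $\tau_{k(y)}(x)$ and their $f$-images) genuinely lie in $X$ so that both~\eqref{eq:odot} and the equivariance of $f$ are legitimately applied, and confirming at the outset that $(X,r)$ is involutive and non-degenerate so that Remark~\ref{rem:involutive} and Theorem~\ref{thm:t} are available.
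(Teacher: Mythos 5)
Your proof is correct and takes essentially the same approach as the paper's: both reduce the claim to the criterion $t(x,y)=t(x,k(y))$ of Theorem~\ref{thm:t} and then evaluate these first coordinates by expanding $k=f\odot g$, pulling $g$ out via~\eqref{eq:odot}, commuting $\lambda_{\lambda_x(y)}$ past $f$ by equivariance, and collapsing to $f(x)$ via the involutivity identity of Remark~\ref{rem:involutive}. The only difference is presentational: you compute $t(x,y)$ and $t(x,k(y))$ directly, while the paper tracks the intermediate pairs $(u,v)=rk_2(x,y)$ and $(a,b)=r(x,y)$, but the algebra is identical.
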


\begin{proof}
	Let $x,y\in X$ and
	\[
	(u,v)=rk_2(x,y)=(\lambda_x(k(y)),\mu_{k(v)}(x)).
	\]
	Then by
	Remark~\ref{rem:involutive} and using that~\eqref{eq:odot} holds and $f$ is
	$\mathcal{G}(X,r)$-equivariant, 
	\begin{align*}
	\lambda_uk(v)&=\lambda_u(f(v)\odot g(v))\\&=\lambda_u(f(v))\odot g(v)=f(\lambda_u(v))\odot g(v)=f(x)\odot g(v)
	\end{align*}
	as $r^2(x,k(y))=r(u,v)=(\lambda_u(v),\mu_v(u))=(x,k(y))$. 
	Then 
	\begin{align}
		\label{eq:rkrk}
		rk_2rk_2(x,y)&=(f(x)\odot g(v),\beta)
	\end{align}
	for some $\beta\in X$. 
	Let $a,b\in X$ be such that $k_2r(x,y)=k_2(a,b)=(a,k(b))$. As we did before, 
	\begin{align*}
		\lambda_a(k(b))&=\lambda_a(f(b)\odot g(b))\\
		&=\lambda_a(f(b))\odot g(b)=f(\lambda_a(b))\odot g(b)=f(x)\odot g(b)\\
	\end{align*}
	Hence 
	\begin{align}
		\label{eq:krkr}
		k_2rk_2r(x,y)=(f(x)\odot g(b),\gamma)
	\end{align}
	for some $\gamma\in X$. 
	Using~\eqref{eq:odot}, 
	\[
		f(x)\odot g(v)=f(x)\odot g(\mu_{k(y)}(x))=f(x)\odot g(\mu_y(x))=f(x)\odot g(b)
	\]
	and therefore the first coordinate of~\eqref{eq:rkrk} is equal to the first
	coordinate of~\eqref{eq:krkr}. Now the claim follows from Theorem~\ref{thm:t}.
\end{proof}

As an application of Theorem~\ref{thm:invariant} we construct reflections using
two-sided braces and left ideals. As was mentioned before, Rump observed
that two-sided braces are equivalent to Jacobson radical rings. This
equivalence is based on the following lemma of~\cite{MR2278047}. 
We provide a proof for completeness. 

\begin{lem}
	\label{lem:2sided}
	Let $A$ be a left brace. Then
	\begin{equation}
		\label{eq:(aob)*c}
		(a\circ b)*c=
		(a\circ b\circ a')*\lambda_a(c)+a*c
	\end{equation}
	for all $a,b,c\in A$. 
	Furthermore, if $A$ is two-sided, then 
	\begin{enumerate}
		\item $(a+b)*c=a*c+b*c$ for all $a,b,c\in A$, 
		\item $(-a)*b=-a*b$ for all $a,b\in A$, and 
		\item $\lambda_a(b*c)=\lambda_a(b)*c$ for all $a,b,c\in A$.
	\end{enumerate}
\end{lem}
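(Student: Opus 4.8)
The plan is to isolate the first identity, which holds in every left brace and is a purely formal consequence of $\lambda\colon(A,\circ)\to\Aut(A,+)$ being a group homomorphism, and then to derive the three two-sided statements from it together with the right distributive law that characterises two-sided braces. For the first identity I would expand both sides using $x*y=\lambda_x(y)-y$ together with $\lambda_{a\circ b}=\lambda_a\lambda_b$ and $\lambda_{a'}=\lambda_a^{-1}$. The left-hand side is $(a\circ b)*c=\lambda_a\lambda_b(c)-c$. On the right, $\lambda_{a\circ b\circ a'}=\lambda_a\lambda_b\lambda_a^{-1}$, so $(a\circ b\circ a')*\lambda_a(c)=\lambda_a\lambda_b(c)-\lambda_a(c)$, and adding $a*c=\lambda_a(c)-c$ the two copies of $\lambda_a(c)$ cancel, leaving $\lambda_a\lambda_b(c)-c$. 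This matches the left-hand side; the only inputs are that $\lambda$ is a homomorphism and that each $\lambda_a$ is additive, and no two-sidedness is needed.

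Now suppose $A$ is \emph{two-sided}, so that in addition to the left brace axiom the right distributive law $(a+b)\circ c=a\circ c-c+b\circ c$ holds for all $a,b,c$. For (1) I would rewrite $*$ purely in terms of $\circ$ and $+$, using that $(A,+)$ is abelian, as $x*y=x\circ y-x-y$; substituting the right distributive law into $(a+b)*c=(a+b)\circ c-(a+b)-c$ and regrouping then produces $a*c+b*c$ at once. Statement (2) is then formal: since $0$ is the common identity of $+$ and $\circ$ one has $0*c=0\circ c-0-c=0$, so applying (1) to $a+(-a)=0$ gives $a*c+(-a)*c=0$, i.e. $(-a)*c=-(a*c)$.

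The main content is (3), and this is where I would spend the most care. The idea is to split $\lambda_a(b)$ along the additive group: since $\lambda_a(b)=-a+a\circ b$, statement (1) gives $\lambda_a(b)*c=(-a)*c+(a\circ b)*c$, and by (2) this equals $(a\circ b)*c-a*c$. Feeding in the first identity of the lemma, the term $a*c$ cancels and I am left with $\lambda_a(b)*c=(a\circ b\circ a')*\lambda_a(c)$. It then remains to identify the right-hand side with $\lambda_a(b*c)$: using $\lambda_{a\circ b\circ a'}=\lambda_a\lambda_b\lambda_a^{-1}$ one computes $(a\circ b\circ a')*\lambda_a(c)=\lambda_a\lambda_b(c)-\lambda_a(c)$, and since $\lambda_a$ is additive this equals $\lambda_a(\lambda_b(c)-c)=\lambda_a(b*c)$, completing the argument.

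The only genuine obstacle is organising (3) so that the two-sided hypothesis enters exactly once, namely through (1), after which everything is formal manipulation with the homomorphism property and additivity of $\lambda$. A more computational alternative for (3) would be to verify directly that $\lambda_a(b\circ c)=\lambda_a(b)\circ c-c+\lambda_a(c)$ by expanding $(-a)\circ c$ via $0\circ c=c$ and the right distributive law and then simplifying; this works but involves heavier additive bookkeeping, so I would prefer the cleaner deduction above.
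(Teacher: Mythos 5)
Your proposal is correct and follows essentially the same route as the paper: the first identity via $\lambda_{a\circ b}=\lambda_{a\circ b\circ a'}\lambda_a$, (1) from the right distributive law plus commutativity of $+$, (2) from $0*b=0$ and (1), and (3) by reducing both $\lambda_a(b*c)$ and $\lambda_a(b)*c$ to the common expression $(a\circ b\circ a')*\lambda_a(c)$ using (1), (2) and the first identity. The only difference is cosmetic (you expand both sides rather than writing one chain of equalities, and you handle the two sides of (3) in the opposite order).
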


\begin{proof}
	Let $a,b,c\in A$. 
	To prove~\eqref{eq:(aob)*c} we compute
	\begin{align*}
		(a\circ b)*c&=\lambda_{a\circ b}(c)-c=\lambda_{a\circ b\circ a'}\lambda_{a}(c)-c\\
		&=(a\circ b\circ a')*\lambda_a(c)+\lambda_a(c)-c
		=(a\circ b\circ a')*\lambda_a(c)+a*c.
	\end{align*}
	We now prove (1). Using the commutativity of the additive group of
	$A$, 
	\[
		(a+b)*c=\lambda_{a+b}(c)-c
		=-a-b+a\circ c-c+b\circ c-c
		=a*c+b*c
	\]
	for all $a,b,c\in A$. To prove (2) we use (1) to obtain 
	\[
		0=0*b=(a+(-a))*b=a*b+(-a)*b,
	\]
	from where (2) follows.	Now we prove (3). On the one hand,
	\begin{equation}
		\label{eq:(3)a}
		\lambda_a(b*c)=\lambda_a(\lambda_b(c)-c)=\lambda_{a\circ b\circ a'}\lambda_a(c)-\lambda_a(c)=(a\circ b\circ a')*\lambda_a(c).
	\end{equation}
	On the other, using (2) and~\eqref{eq:(aob)*c}, 
	\begin{align*}
		\lambda_a(b)*c&=(-a+a\circ b)*c=(-a)*c+(a\circ b)*c\\
		&=-a*c+(a\circ b\circ a')*\lambda_a(c)+a*c
		=(a\circ b\circ a')*\lambda_a(c).\qedhere
	\end{align*}
\end{proof}

\begin{lem}
	Let $A$ be a left brace and $X$ be a left ideal of $A$. The map 
	$r=r_A|_{X\times X}$ is a solution $(X,r)$ to the YBE.
\end{lem}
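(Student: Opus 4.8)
The plan is to reduce everything to a single containment: that $r_A$ maps $X\times X$ into itself. Once this is established, all remaining assertions are inherited for free from the properties of $r_A$ on $A\times A$. Recall that for $x,y\in A$ one has $\lambda_x(y)=x*y+y$ and, by the definition of $\mu$, $\mu_y(x)=\lambda_x(y)'*x+x$. Thus both coordinates of $r_A(x,y)=(\lambda_x(y),\mu_y(x))$ are expressed purely through the operation $*$ and the additive structure of $A$, which is exactly what the definition of a left ideal controls.

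First I would check the first coordinate. For $x,y\in X$ we have $x*y\in A*X\subseteq X$, and since $X$ is a subgroup of $(A,+)$ it follows that $\lambda_x(y)=x*y+y\in X$. Next comes the second coordinate, which I expect to be the only genuine obstacle. The subtlety is that $\mu_y(x)=\lambda_x(y)'*x+x$ involves the circle-inverse $\lambda_x(y)'$, which a priori need not lie in $X$, since a left ideal is only required to be closed under $+$ and under $A*X$, and \emph{not} under $\circ$. The point is that this does not matter: whatever element of $A$ the inverse $\lambda_x(y)'$ happens to be, we still have $\lambda_x(y)'*x\in A*X\subseteq X$ because $x\in X$, and hence $\mu_y(x)=\lambda_x(y)'*x+x\in X$ again because $X$ is an additive subgroup. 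So the definition $A*X\subseteq X$, rather than any closure under the circle operation, is precisely what makes the argument go through.

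Having shown $r_A(X\times X)\subseteq X\times X$, I would set $r=r_A|_{X\times X}$ and conclude as follows. Since $r_A^2=\id$ on $A\times A$ and $r_A$ preserves $X\times X$, we obtain $r^2=\id_{X\times X}$, so $r$ is an involutive bijection of $X\times X$. Finally, because $r$ preserves $X\times X$, the maps $r_1=r\times\id$ and $r_2=\id\times r$ preserve $X\times X\times X$, and therefore the braid relation $r_1r_2r_1=r_2r_1r_2$, which holds on $A\times A\times A$, restricts to $X\times X\times X$. This proves that $(X,r)$ is a solution to the YBE. If one also wants non-degeneracy, it comes out of the same computation: $\lambda_x$ maps $X$ into $X$, and since $\lambda_x^{-1}=\lambda_{x'}$ also satisfies $\lambda_{x'}(X)\subseteq X$, one gets $\lambda_x(X)=X$, so each $\sigma_x=\lambda_x|_X$ is a bijection of $X$.
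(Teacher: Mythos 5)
Your proof is correct, and it takes a mildly but genuinely different route from the paper's. Both arguments reduce the lemma to the single containment $r_A(X\times X)\subseteq X\times X$, after which bijectivity (via $r_A^2=\id$) and the braid relation restrict automatically. The difference is in how the second coordinate is handled. The paper's proof asserts that a left ideal $X$ is not only an additive subgroup but also a subgroup of the multiplicative group $(A,\circ)$, and then observes that $\mu_y(x)=\lambda_x(y)'\circ x\circ y$ is a circle-product of elements of $X$. That multiplicative closure is true, but it is not part of the paper's definition of a left ideal and is left unproved there (one needs, e.g., $x\circ y=x+\lambda_x(y)\in X$ and $x'=-\lambda_{x'}(x)\in X$, both of which use $\lambda_a(X)\subseteq X$ for all $a\in A$). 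You instead use the identity $\mu_y(x)=\lambda_x(y)'*x+x$, which the paper records when defining $r_A$, and note that $\lambda_x(y)'*x\in A*X\subseteq X$ regardless of whether $\lambda_x(y)'$ lies in $X$; this uses only the two defining properties of a left ideal and is therefore more self-contained. The paper's version is shorter and implicitly records the useful fact that left ideals are circle-subgroups; yours avoids having to prove that fact. Your closing observations on involutivity and on non-degeneracy of the restriction (each $\sigma_x=\lambda_x|_X$ is bijective because $\lambda_x^{-1}=\lambda_{x'}$ also preserves $X$) are likewise correct; the latter is not demanded by the statement, but it is the property needed wherever the lemma is applied later in the paper.
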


\begin{proof}
	Since $X$ is a left ideal, $(X,+)$ is a subgroup of $(A,+)$ and $(X,\circ)$
	is a subgroup of $(A,\circ)$.  Then
	$r_A(x,y)=(\lambda_x(y),\lambda_x(y)'\circ x\circ y)\in X\times X$ if 
	$x,y\in X$.
\end{proof}

Now we obtain several corollaries of Theorem~\ref{thm:general} in the case of
Jacobson radical rings. 

\begin{cor}
	\label{cor:k=f*g}
	Let $A$ be a two-sided brace, $X$ be a left ideal of $A$ and 
	$f,g\colon X\to X$ be maps such
	that $f$ is $\mathcal{G}(X,r)$-equivariant and 
	\begin{equation}
		\label{eq:Agata}
		f(x)*g(\mu_{k(y)}(x))=f(x)*g(\mu_y(x)) 
	\end{equation}
	for all $x,y\in X$. Then 
	$k\colon X\to X$, $k(x)=f(x)*g(x)$, is a reflection of $(X,r_X)$. 
\end{cor}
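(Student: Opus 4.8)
The plan is to derive Corollary~\ref{cor:k=f*g} as a special case of Theorem~\ref{thm:general}, specializing the abstract $\odot$-product to the concrete brace operation $*$. First I would set $Y=X$ and take $g$ to be the given map, and then declare the bilinear-type operation to be $x\odot y=x*y=\lambda_x(y)-y$. With this choice, $k(x)=f(x)\odot g(x)=f(x)*g(x)$ matches the desired reflection, and the hypothesis~\eqref{eq:Agata} becomes exactly the final condition $f(x)\odot g(\mu_{k(y)}(x))=f(x)\odot g(\mu_y(x))$ required by Theorem~\ref{thm:general}. Since $X$ is a left ideal of $A$, the preceding lemma guarantees that $(X,r)$ with $r=r_A|_{X\times X}$ is a genuine solution, so the ambient hypotheses of Theorem~\ref{thm:general} are in place.

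The one genuine thing to check is the compatibility condition~\eqref{eq:odot}, namely $\lambda_x(y\odot g(z))=\lambda_x(y)\odot g(z)$ for all $x,y,z\in X$, which in the present notation reads
\[
	\lambda_x\bigl(y*g(z)\bigr)=\lambda_x(y)*g(z).
\]
This is precisely part~(3) of Lemma~\ref{lem:2sided}, which holds because $A$ is two-sided; indeed $\lambda_a(b*c)=\lambda_a(b)*c$ for all $a,b,c\in A$, and restricting to $a=x$, $b=y$, $c=g(z)$ gives the claim. Here I would also note that the $\odot$-product is well defined as a map $X\times Y\to X$, since $X$ is a left ideal and hence closed under $*$ with the left factor in $X$.

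The main obstacle—such as it is—lies entirely in verifying that $*$ satisfies the left-$\lambda$-compatibility~\eqref{eq:odot}; everything else is a direct translation of notation. That verification is not automatic for a general left brace (it genuinely uses the two-sided hypothesis, via~\eqref{eq:(aob)*c} and the additivity of $*$ in its first argument), which is exactly why the corollary is stated for two-sided braces rather than arbitrary left braces. Once~\eqref{eq:odot} is confirmed through Lemma~\ref{lem:2sided}(3), the conclusion that $k$ is a reflection of $(X,r_X)$ follows immediately by invoking Theorem~\ref{thm:general} with the stated choices of $f$, $g$, and $\odot=*$.
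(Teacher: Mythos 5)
Your proof is correct and takes essentially the same approach as the paper, which likewise deduces the corollary from Theorem~\ref{thm:general} with $x\odot y=x*y$, the compatibility condition~\eqref{eq:odot} being exactly Lemma~\ref{lem:2sided}(3); your write-up merely makes explicit the verification the paper leaves implicit. One minor slip: the closure $x*g(z)\in X$ holds because the \emph{right} factor lies in the left ideal $X$ (a left ideal satisfies $A*X\subseteq X$), not because the left factor does, but since both arguments of $*$ lie in $X$ here this does not affect the argument.
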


\begin{proof}
	It follows from Theorem~\ref{thm:general} with $x\odot y=x*y$. 
\end{proof}

\begin{cor}
    \label{cor:5}
	Let $A$ be a two-sided brace, $X$ be a left ideal of $A$ 
	and  $g\colon X\to X$ be a map such that
	$g(\mu_{k(y)}(x))=g(\mu_y(x))$ 
	for all $x,y\in X$.  Then $k\colon X\to X$,
	$k(x)=x*g(x)$ is a reflection of $(X,r_X)$.
\end{cor}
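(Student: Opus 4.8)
The plan is to deduce this directly from Corollary~\ref{cor:k=f*g} by specializing $f$ to the identity map. First I would observe that the identity map $\id\colon X\to X$ is trivially $\mathcal{G}(X,r)$-equivariant, since $\id$ commutes with every $\sigma_x=\lambda_x|_X$. Setting $f=\id$ in Corollary~\ref{cor:k=f*g}, the map $k$ becomes $k(x)=\id(x)*g(x)=x*g(x)$, which is precisely the map whose reflection property we wish to establish.

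With this choice of $f$, the condition~\eqref{eq:Agata} required by Corollary~\ref{cor:k=f*g} reads
\[
	x*g(\mu_{k(y)}(x))=x*g(\mu_y(x))
\]
for all $x,y\in X$. My task is therefore to check that the hypothesis of the present corollary, namely $g(\mu_{k(y)}(x))=g(\mu_y(x))$ for all $x,y\in X$, implies this condition. But this is immediate: if $g(\mu_{k(y)}(x))=g(\mu_y(x))$, then applying the operation $x*(-)$ to both sides yields exactly the displayed equality. Hence all hypotheses of Corollary~\ref{cor:k=f*g} are satisfied.

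Having verified both that $f=\id$ is $\mathcal{G}(X,r)$-equivariant and that the equivariance-type condition~\eqref{eq:Agata} holds, Corollary~\ref{cor:k=f*g} applies and tells us that $k(x)=f(x)*g(x)=x*g(x)$ is a reflection of $(X,r_X)$, which is the claim. I do not expect any genuine obstacle here, as the statement is a clean specialization: the only points to be careful about are that the identity is indeed $\mathcal{G}(X,r)$-equivariant (trivially true) and that the weaker hypothesis on $g$ feeds correctly into condition~\eqref{eq:Agata} (which follows by left-multiplying by $x*(-)$). The entire argument is a one-line invocation of the previous corollary.
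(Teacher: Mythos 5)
Your proposal is correct and follows exactly the same route as the paper, which proves this corollary simply by invoking Corollary~\ref{cor:k=f*g} with $f=\id$. The details you supply (that $\id$ is trivially $\mathcal{G}(X,r)$-equivariant and that applying $x*(-)$ to the hypothesis on $g$ yields condition~\eqref{eq:Agata}) are precisely the routine verifications the paper leaves implicit.
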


\begin{proof}
	It follows from Corollary~\ref{cor:k=f*g} with $f=\id$. 
\end{proof}

\begin{cor}
	\label{cor:6}
	Let $A$ be a two-sided brace and $a\in A$. Then $k(x)=x*a$ is a reflection
	of $(A,r_A)$.
\end{cor}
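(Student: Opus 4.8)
The plan is to derive Corollary~\ref{cor:6} as a special case of Corollary~\ref{cor:5} by taking the map $g$ to be the constant map $g(x)=a$ for all $x\in X$, with the left ideal $X$ chosen to be the whole brace $A$. Since $A$ is itself a left ideal of $A$, the restriction $r_A|_{A\times A}=r_A$ is the solution $(A,r_A)$, so the hypotheses of Corollary~\ref{cor:5} apply verbatim to $X=A$.

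With this choice the verification is immediate. First I would check that the map $k(x)=x*g(x)$ from Corollary~\ref{cor:5} reduces to the claimed $k(x)=x*a$: indeed $k(x)=x*g(x)=x*a$ since $g$ is constantly $a$. Second I would verify the single hypothesis of Corollary~\ref{cor:5}, namely that $g(\mu_{k(y)}(x))=g(\mu_y(x))$ for all $x,y\in X$. But both sides equal $a$ because $g$ is constant, so the equation holds trivially regardless of the arguments $\mu_{k(y)}(x)$ and $\mu_y(x)$. Hence all the hypotheses of Corollary~\ref{cor:5} are satisfied, and it yields that $k(x)=x*a$ is a reflection of $(A,r_A)$.

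I do not expect any genuine obstacle here, since the result is a direct specialization obtained by collapsing the nontrivial commutation condition through the use of a constant map; the whole content is already packaged in the earlier corollaries (and ultimately in Theorem~\ref{thm:general}). The only point that deserves a brief remark is that one is entitled to take $X=A$ as a left ideal, which is clear from the definition since $A*A\subseteq A$ and $(A,+)$ is trivially a subgroup of itself. Thus the proof amounts to a single sentence invoking Corollary~\ref{cor:5} with $f=\id$ already absorbed and $g$ constant.
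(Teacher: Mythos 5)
Your proof is correct and essentially identical to the paper's: the paper invokes Corollary~\ref{cor:k=f*g} directly with $f=\id$ and $g(x)=a$, while you invoke Corollary~\ref{cor:5} (which is exactly Corollary~\ref{cor:k=f*g} with $f=\id$ already specialized) with $X=A$ and $g$ constant, so the two arguments coincide. Your extra remark that $X=A$ is a left ideal of itself is a harmless point the paper leaves implicit.
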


\begin{proof}
	Use Corollary~\ref{cor:k=f*g} with $f=\id$ and $g(x)=a$ for all
	$x\in A$.
\end{proof}

Reflections from Corollaries~\ref{cor:5} and~\ref{cor:6} are 
$\mathcal{G}(X,r)$-equivariant. 

\begin{cor}
	\label{cor:7}
	Let $A$ be a two-sided brace, $X$ be a left ideal of $A$ and  $g\colon X\to
	X$ be a map such that $g(\mu_{k(y)}(x))=g(\mu_y(x))$ for all $x,y\in X$.
	Then $k\colon X\to X$, $k(x)=x+x*g(x)$, is a reflection of $(X,r_X)$.
\end{cor}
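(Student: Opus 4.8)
The plan is to apply Theorem~\ref{thm:general} directly, choosing $f=\id$ and the auxiliary operation $x\odot y=x+x*y$; this is the same strategy as in Corollary~\ref{cor:5}, the only change being that the operation $x\odot y=x*y$ used there is replaced by $x\odot y=x+x*y$. First I would check that this $\odot$ is a well-defined map $X\times X\to X$: since $X$ is a left ideal, $(X,+)$ is a subgroup of $(A,+)$ and $x*y=\lambda_x(y)-y\in A*X\subseteq X$, so $x+x*y\in X$ whenever $x,y\in X$ (and likewise $\lambda_x(y)=-x+x\circ y\in X$, so the operation accepts $\lambda_x(y)$ as a first argument).

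The crucial step is verifying the compatibility condition~\eqref{eq:odot}, namely $\lambda_x(y\odot g(z))=\lambda_x(y)\odot g(z)$ for all $x,y,z\in X$. Expanding $y\odot g(z)=y+y*g(z)$, I would use that $\lambda_x$ is an automorphism of the additive group $(A,+)$ together with part~(3) of Lemma~\ref{lem:2sided}, which requires $A$ to be two-sided:
\[
	\lambda_x\bigl(y+y*g(z)\bigr)=\lambda_x(y)+\lambda_x\bigl(y*g(z)\bigr)=\lambda_x(y)+\lambda_x(y)*g(z)=\lambda_x(y)\odot g(z).
\]
This is the only point where two-sidedness enters, and I expect it to be the main (though routine) step of the argument.

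Finally, $f=\id$ is clearly $\mathcal{G}(X,r)$-equivariant, and $k(x)=f(x)\odot g(x)=x+x*g(x)$ is the desired map. It then remains to check the hypothesis of Theorem~\ref{thm:general}, that is $f(x)\odot g(\mu_{k(y)}(x))=f(x)\odot g(\mu_y(x))$ for all $x,y\in X$. Since $f=\id$, this reads $x+x*g(\mu_{k(y)}(x))=x+x*g(\mu_y(x))$, which follows at once from the assumption $g(\mu_{k(y)}(x))=g(\mu_y(x))$. Theorem~\ref{thm:general} then yields that $k$ is a reflection of $(X,r_X)$.
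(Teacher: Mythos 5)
Your proposal is correct and follows exactly the paper's own route: the paper proves this corollary by invoking Theorem~\ref{thm:general} with $f=\id$ and $x\odot y=x+x*y$, which is precisely your choice. The verifications you spell out (closure of $\odot$ on the left ideal $X$, and condition~\eqref{eq:odot} via additivity of $\lambda_x$ plus part~(3) of Lemma~\ref{lem:2sided}, where two-sidedness is indeed the essential ingredient) are the details the paper leaves implicit.
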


\begin{proof}
	It follows from Theorem~\ref{thm:general} with $f=\id$ and $x\odot	y=x+x*y$.
\end{proof}

\begin{cor}
	\label{cor:Agata}
	Let $A$ be a two-sided brace, $X$ be an ideal of $A$
	and $g\colon A\to A$ be such that
	$g(A)\subseteq X$ and $g(a+x)=g(a)$ for all $a\in A$ and $x\in X$. Then
	$k(x)=x+x*g(x)$ is a reflection of $(A,r_A)$.
\end{cor}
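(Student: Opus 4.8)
The goal is to deduce Corollary~\ref{cor:Agata} from Corollary~\ref{cor:7}, so the plan is to verify that the hypotheses of the latter are met under the assumptions given here. Since $X$ is an ideal (hence in particular a left ideal) of the two-sided brace $A$, taking the ambient left ideal in Corollary~\ref{cor:7} to be $A$ itself (which is trivially a left ideal of $A$), the formula $k(x)=x+x*g(x)$ will be a reflection of $(A,r_A)$ provided the map $g$ satisfies $g(\mu_{k(y)}(x))=g(\mu_y(x))$ for all $x,y\in A$. The one subtlety is that in Corollary~\ref{cor:7} the map $g$ has domain and codomain the left ideal, whereas here $g\colon A\to A$ with $g(A)\subseteq X$; but since we are applying the corollary with the left ideal equal to $A$ and only need $x+x*g(x)$ to land in $A$, this causes no difficulty. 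The whole task therefore reduces to establishing the single equivariance-type identity for $g$.

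\smallskip
To prove $g(\mu_{k(y)}(x))=g(\mu_y(x))$, the plan is to exploit the hypothesis $g(a+x)=g(a)$ for all $a\in A$, $x\in X$, which says precisely that $g$ is constant on additive cosets of $X$; equivalently $g$ factors through the quotient additive group $A/X$. Thus it suffices to show that $\mu_{k(y)}(x)$ and $\mu_y(x)$ lie in the same additive coset of $X$, i.e.\ that their difference
\[
\mu_{k(y)}(x)-\mu_y(x)
\]
belongs to $X$. First I would unwind the definition $\mu_b(a)=\lambda_a(b)'*a+a$ given in~\eqref{eq:rdef}, writing
\[
\mu_{k(y)}(x)-\mu_y(x)=\bigl(\lambda_x(k(y))'*x+x\bigr)-\bigl(\lambda_x(y)'*x+x\bigr)=\lambda_x(k(y))'*x-\lambda_x(y)'*x.
\]
The main work is then to control the two $*$-terms modulo $X$, and this is where the fact that $X$ is a (two-sided) ideal, not merely a left ideal, becomes essential: I expect to use that $k(y)=y+y*g(y)$ differs from $y$ by the element $y*g(y)\in X$ (since $A*X\subseteq X$ and $g(y)\in X$), so that $k(y)$ and $y$ lie in the same coset, and then to push this congruence through $\lambda_x$, the circle-inverse ${}'$, and the $*$-operation, using the ideal conditions $a+I=I+a$ and $a\circ I=I\circ a$ together with the distributivity facts from Lemma~\ref{lem:2sided}.

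\smallskip
The hard part will be tracking the coset of $X$ cleanly through these operations. Concretely, I would argue that since $X$ is an ideal it is invariant under every $\lambda_a$ (because $\lambda_a(b)-b=a*b\in X$ whenever $b\in X$, using $A*X\subseteq X$, together with additive normality), so $\lambda_x(k(y))$ and $\lambda_x(y)$ agree modulo $X$; then that the circle operation and its inverse preserve cosets of $X$ because $X$ is a normal subgroup of $(A,\circ)$ and $a\circ I=I\circ a$; and finally that $*$-multiplying on the right by the fixed element $x$ preserves the congruence modulo $X$, using part~(1) of Lemma~\ref{lem:2sided} (additivity of $*$ in the first argument) to reduce a difference $(p-q)*x$ to $p*x-q*x$ and the ideal property $X*A\subseteq X$, which holds because $A$ is two-sided. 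Assembling these steps shows $\mu_{k(y)}(x)-\mu_y(x)\in X$, hence $g(\mu_{k(y)}(x))=g(\mu_y(x))$, and Corollary~\ref{cor:7} then yields the claim.
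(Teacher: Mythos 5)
Your overall route is the paper's own: reduce, via Corollary~\ref{cor:7} applied with the left ideal taken to be $A$ itself (Corollary~\ref{cor:7} is just Theorem~\ref{thm:general} with $f=\id$ and $a\odot b=a+a*b$, which is exactly what the paper invokes), to the single identity $g(\mu_{k(y)}(x))=g(\mu_y(x))$, and then obtain that identity from $k(y)-y=y*g(y)\in X$ together with the hypothesis that $g$ is constant on additive cosets of $X$. The only difference lies in how the congruence $\mu_{k(y)}(x)\equiv\mu_y(x)\pmod{X}$ is established: the paper passes to the quotient brace $A/X$ (legitimate precisely because $X$ is an ideal, so $\lambda$, $\mu$ and circle inversion all descend to the quotient), whereas you propagate the congruence through $\lambda_x$, circle inversion, and right $*$-multiplication by hand.

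It is in that hand propagation that your write-up has a gap. The last step needs $X*A\subseteq X$, and your stated reason, ``which holds because $A$ is two-sided,'' is not by itself a valid justification: a left ideal of a two-sided brace is merely a left ring ideal of the corresponding radical ring, and left ring ideals need not be right ideals. What saves the step is that $X$ is a brace \emph{ideal}, i.e.\ also normal in $(A,\circ)$, and from this one can indeed derive $X*A\subseteq X$, but it takes an argument you do not give. For instance: for $a\in A$ and $i\in X$,
\[
a\circ i\circ a'=i+a*i+(i+a*i)*a'\in X
\]
by normality; since $i\in X$ and $a*i\in X$, this forces $\lambda_a(i)*a'\in X$, and because $\lambda_a(X)=X$ (as $\lambda_a$ and $\lambda_{a'}$ both preserve the left ideal $X$) and $a\mapsto a'$ is a bijection of $A$, this yields $X*A\subseteq X$. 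Similarly, your claim that circle inversion preserves additive $X$-cosets silently uses that additive and circle cosets of an ideal coincide, $a+X=a\circ X$, which again follows from $\lambda_a(X)=X$. Both facts are true, so your proof can be completed; but as written these two load-bearing claims are asserted rather than proved, and the paper's device of working in the quotient brace $A/X$ is exactly what renders them unnecessary.
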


\begin{proof}
	Note that $k(x)-x=x*g(x)\in X$ for all $x\in X$. Since 
	\[
		\mu_{k(y)}(x)=\lambda^{-1}_{\lambda_x(k(y))}(x)=\lambda^{-1}_{\lambda_x(y)}(x)=\mu_y(x)
	\]
	in the quotient $A/X$, it follows that 
	$\lambda_x(k(y))=\lambda_x(y)+z$ for some $z\in X$. Hence 
	\[
		g(\mu_{k(y)}(x))=g(\mu_y(x)+z)=g(\mu_y(x))
	\]
	and the claim follows from Theorem~\ref{thm:general}
	with $f=\id$ and
	$a\odot b=a+a*b$.
\end{proof}

Reflections from Corollaries~\ref{cor:7} and~\ref{cor:Agata} do not seem to be
necessarily $\mathcal{G}(X,r)$-equivariant. 

\begin{cor}
	Let $A$ be a two-sided brace, $X$ be an ideal of $A$, $f\colon A\to A$ be 
	$\mathcal{G}(X,r)$-equivariant and such that $f(x)-x\in X$ for all $x\in A$
	and $g\colon A\to A$ be such that $g(A)\subseteq X$ and $g(a+x)=g(a)$ for
	all $a\in A$ and $x\in X$. Then $k(x)=f(x)+f(x)*g(x)$ is a reflection of
	$(A,r_A)$.
\end{cor}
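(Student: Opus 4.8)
The plan is to derive this from Theorem~\ref{thm:general}, generalizing the proof of Corollary~\ref{cor:Agata} by retaining a non-trivial equivariant map $f$. I would apply Theorem~\ref{thm:general} with the solution set taken to be all of $A$, with $Y=X$ (legitimate since $g(A)\subseteq X$), and with the operation $a\odot b=a+a*b$, so that $k(x)=f(x)+f(x)*g(x)$ is precisely $f(x)\odot g(x)$. Two things then need checking: the compatibility condition~\eqref{eq:odot}, and the equality $f(x)\odot g(\mu_{k(y)}(x))=f(x)\odot g(\mu_y(x))$ demanded by the theorem.

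For~\eqref{eq:odot}, I would compute directly for $a,b,c\in A$. Since $\lambda_a$ is additive, $\lambda_a(b\odot g(c))=\lambda_a(b)+\lambda_a(b*g(c))$; because $A$ is two-sided, Lemma~\ref{lem:2sided}(3) gives $\lambda_a(b*g(c))=\lambda_a(b)*g(c)$, so $\lambda_a(b\odot g(c))=\lambda_a(b)+\lambda_a(b)*g(c)=\lambda_a(b)\odot g(c)$. (The products $b*g(c)$ land in $X$ since $X$ is a left ideal and $g(c)\in X$, so $\odot$ has the correct target.) This is the step where the two-sided hypothesis is genuinely used.

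For the second condition it suffices to prove $g(\mu_{k(y)}(x))=g(\mu_y(x))$ for all $x,y\in A$. The crucial point is that $k(y)\equiv y\pmod{X}$: indeed $k(y)-y=(f(y)-y)+f(y)*g(y)$, where $f(y)-y\in X$ by hypothesis and $f(y)*g(y)\in X$ because $X$ is an ideal. Passing to the brace $A/X$ and using $\mu_y(x)=\lambda^{-1}_{\lambda_x(y)}(x)$ from Remark~\ref{rem:involutive}, I would conclude $\mu_{k(y)}(x)\equiv\mu_y(x)\pmod{X}$, i.e. $\mu_{k(y)}(x)=\mu_y(x)+z$ with $z\in X$. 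Then the assumption $g(a+x)=g(a)$ yields $g(\mu_{k(y)}(x))=g(\mu_y(x)+z)=g(\mu_y(x))$, which promotes to the required $\odot$-equality and lets Theorem~\ref{thm:general} conclude.

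I expect no deep obstacle here; the difficulty is almost entirely bookkeeping. The two genuinely load-bearing facts are Lemma~\ref{lem:2sided}(3) for~\eqref{eq:odot} and the descent of $\lambda,\mu$ to the quotient $A/X$, which is what makes the congruence $\mu_{k(y)}(x)\equiv\mu_y(x)$ meaningful and requires $X$ to be a full (two-sided) ideal rather than merely a left ideal. The only new ingredient relative to Corollary~\ref{cor:Agata} is the extra summand $f(y)-y$ when verifying $k(y)\equiv y\pmod{X}$, since the equivariance of $f$ is already absorbed by Theorem~\ref{thm:general}.
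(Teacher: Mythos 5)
Your proposal is correct and takes essentially the same approach as the paper, whose proof consists of repeating the proof of Corollary~\ref{cor:Agata} (i.e.\ Theorem~\ref{thm:general} with $a\odot b=a+a*b$, the congruence $k(y)\equiv y \pmod{X}$, and the passage to the quotient brace $A/X$) after noting that $k(x)-x$ still lies in $X$. If anything, your write-up is more careful than the paper's one-line remark: you record the correct decomposition $k(x)-x=(f(x)-x)+f(x)*g(x)$ (the paper sloppily writes this difference as $f(x)*g(x)$) and you explicitly verify condition~\eqref{eq:odot} via Lemma~\ref{lem:2sided}(3), a step the paper leaves implicit.
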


\begin{proof}
	It is similar to the proof of Corollary~\ref{cor:Agata}. We only need to write in the first line
	$k(x)-x=f(x)*g(x)\in X$ for all $x\in X$.
\end{proof}

For some type of solutions $(X,r)$ all the reflection maps are 
as those of the previous corollaries as the following example shows:

\begin{example}
Let $(A, +, *  )$ be a nilpotent algebra over a field  of characteristic two
and let $z\in A$. Then
\[
S=\{z*r, z*r+z: r\in A\}
\]
is a nilpotent ring.
Let 
\[
X=\{\lambda _{s}(z): s\in S\}
\] 
and let $(X, r)$ be the associated
solution to the YBE. We will show that  if $a, b\in X$ then $b=\lambda _{a}(c)$ for some $c\in A$. Indeed, if $a=\lambda _{s}(z)$, for $s\in S$ 
then $b=\lambda _{s}(\lambda _{s^{-1}}(b))$ and  $\lambda _{s^{-1}}(b)\in X$, thus 
 $\lambda
_{s^{-1}}(b)=z*r+z$ for some $r\in R$.  It follows that  $b=\lambda _{s}(z*d+z)=a*d+a$, consequently any function $k: X\rightarrow X$ 
can be written as $k(x)=x*g(x)+x$ for some $g(x)\in
Az$. Therefore any reflection $k$ on $(X,r)$ satisfies assumptions of
Theorem~\ref{thm:general} with $x\odot y=x*y+x$ for $x,y\in X$ and $Y=Az$. In particular $k(x)$ is a solution of the reflection
equation on $(X,r)$  if and only if  $k(x)=x*g(x)+x$ for some $g:X\rightarrow Az$  such that   $g(\tau _{y}(x))=g(\tau _{k(y)}(x))$ for all $x,y\in
X$.
\end{example}

\section{Other reflections}

The following modification of Theorem~\ref{thm:general} is sometimes useful for describing solutions of the  reflection equation related to two-sided braces.

\begin{thm}
	\label{thm:aggi}
	Let $A$ be a two-sided brace, $X\subseteq A$ be a subset such that the restriction 
	$r=r_A|_{X\times X}$ is a solution $(X,r)$ to the YBE, 
	$g_1,\dots,g_n\colon X\to X$, 
	$f_1,\dots,f_n\colon X\to X$ be 
	$\mathcal{G}(X,r)$-equivariant 
	and $k\colon X\to X$ be given by
	\[
	k(x)=x+\sum_{i=1}^{n}f_{i}(x)\odot g_{i}(x),
	\]
	where the operation $\odot$ is as in Theorem~\ref{thm:general}. 
	Then $k$ is a reflection of $(X,r)$ if and only if 
	\begin{equation*}
		\sum_{i=1}^{n}f_i(x)\odot g_i(\mu_{k(y)}(x))=\sum_{i=1}^{n}f_i(x)\odot g_i(\mu_y(x))
	\end{equation*}
	for all $x,y\in X$.
\end{thm}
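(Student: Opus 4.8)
The plan is to run the argument of Theorem~\ref{thm:general} essentially verbatim, the only new ingredient being that each $\lambda_a$ is an additive automorphism of $(A,+)$, so it distributes over both the leading term $x$ and the sum $\sum_i f_i(x)\odot g_i(x)$ defining $k$. As in that proof, Theorem~\ref{thm:t} reduces the claim to comparing, for all $x,y\in X$, the first coordinates of $rk_2rk_2(x,y)$ and $k_2rk_2r(x,y)$.

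First I would fix $x,y\in X$ and set $(u,v)=rk_2(x,y)$, so that $u=\lambda_x(k(y))$ and $v=\mu_{k(y)}(x)$; since $(X,r)$ is involutive, $r^2(x,k(y))=(x,k(y))$ forces $\lambda_u(v)=x$. The first coordinate of $rk_2rk_2(x,y)$ is $\lambda_u(k(v))$, and using $\lambda_u\in\Aut(A,+)$ I would expand
\[
\lambda_u(k(v))=\lambda_u(v)+\sum_{i=1}^{n}\lambda_u\bigl(f_i(v)\odot g_i(v)\bigr).
\]
Applying~\eqref{eq:odot} together with the $\mathcal{G}(X,r)$-equivariance of each $f_i$ turns each summand into $\lambda_u(f_i(v))\odot g_i(v)=f_i(\lambda_u(v))\odot g_i(v)=f_i(x)\odot g_i(v)$, while $\lambda_u(v)=x$ takes care of the leading term. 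Hence this first coordinate equals $x+\sum_i f_i(x)\odot g_i(v)$ with $v=\mu_{k(y)}(x)$.

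Next I would repeat the computation with $(a,b)=r(x,y)$, so that $a=\lambda_x(y)$ and $b=\mu_y(x)$, where Remark~\ref{rem:involutive} gives $\lambda_a(b)=x$. The identical distribution shows that the first coordinate of $k_2rk_2r(x,y)$, namely $\lambda_a(k(b))$, equals $x+\sum_i f_i(x)\odot g_i(b)$ with $b=\mu_y(x)$. Comparing the two expressions, the first coordinates agree for all $x,y$ exactly when
\[
\sum_{i=1}^{n}f_i(x)\odot g_i(\mu_{k(y)}(x))=\sum_{i=1}^{n}f_i(x)\odot g_i(\mu_y(x)),
\]
which is the stated condition, and Theorem~\ref{thm:t} then yields the equivalence.

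The one point deserving care is that~\eqref{eq:odot} must hold for every $g_i$ at once. In the intended two-sided setting with $x\odot y=x*y$ (or $x\odot y=x+x*y$) this is automatic from Lemma~\ref{lem:2sided}(3); this is presumably why the two-sided hypothesis appears here, even though the additivity needed to split the sum already holds in any left brace. Beyond this, the proof is a routine bookkeeping extension of Theorem~\ref{thm:general}, and I anticipate no real obstacle other than keeping the indices straight.
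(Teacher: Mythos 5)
Your proposal is correct and takes essentially the same approach as the paper: the paper's own proof of Theorem~\ref{thm:aggi} consists of the single line ``it is similar to the proof of Theorem~\ref{thm:general},'' and your argument---splitting $\lambda_u(k(v))$ additively over the leading term and the sum, applying \eqref{eq:odot} and the $\mathcal{G}(X,r)$-equivariance of each $f_i$, then invoking Theorem~\ref{thm:t} and cancelling the common term $x$---is precisely that adaptation spelled out. Your closing remark, that the two-sided hypothesis serves to make \eqref{eq:odot} hold automatically for $\odot=*$ (or $x\odot y=x+x*y$) via Lemma~\ref{lem:2sided}(3) while the additive splitting needs only the left-brace axioms, accurately reflects the role of that hypothesis in the paper.
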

\begin{proof}
 It is similar to the proof of  Theorem~\ref{thm:general}.
\end{proof}

For a map $k\colon X\to X$, 
write $k_1=k\times\id$ and $k_2=\id\times k$. 

\begin{pro}
	Let $(X,r)$ be a non-degenerate involutive solution. Then $k_2r=rk_2$ if
	and only if $k=\id$. 
\end{pro}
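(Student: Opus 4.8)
The plan is to expand both composites explicitly and compare coordinates, relying only on the non-degeneracy of $(X,r)$. Writing $r(x,y)=(\sigma_x(y),\tau_y(x))$ as usual, I would first compute
\[
	k_2r(x,y)=k_2(\sigma_x(y),\tau_y(x))=(\sigma_x(y),k(\tau_y(x)))
\]
and
\[
	rk_2(x,y)=r(x,k(y))=(\sigma_x(k(y)),\tau_{k(y)}(x)).
\]
Equality $k_2r=rk_2$ for all $x,y\in X$ is then equivalent to the two coordinate conditions $\sigma_x(y)=\sigma_x(k(y))$ and $k(\tau_y(x))=\tau_{k(y)}(x)$ for all $x,y\in X$.

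The key observation is that the \emph{first} of these conditions alone already forces $k=\id$: since $(X,r)$ is non-degenerate, each $\sigma_x$ is a bijection, hence injective, so from $\sigma_x(y)=\sigma_x(k(y))$ one obtains $y=k(y)$ for every $y\in X$. Note that the second coordinate condition, and indeed the involutive hypothesis, play no role in this implication.

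For the converse, if $k=\id$ then $k_2=\id_{X\times X}$, so both $k_2r$ and $rk_2$ reduce to $r$ and the identity $k_2r=rk_2$ holds trivially.

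I do not expect a genuine obstacle here: the argument is a direct computation, and the whole content is contained in the first-coordinate comparison. The single point requiring care is the appeal to non-degeneracy, which supplies the injectivity of $\sigma_x$ that converts the equality $\sigma_x(y)=\sigma_x(k(y))$ into $k=\id$.
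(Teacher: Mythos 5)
Your proof is correct, but it takes a shorter route than the paper's. You extract $k=\id$ from the first-coordinate condition alone: $\sigma_x(y)=\sigma_x(k(y))$ plus injectivity of $\sigma_x$ immediately gives $k(y)=y$. The paper instead works through the \emph{second} coordinate: it rewrites $\tau_{k(y)}(x)=\sigma^{-1}_{\sigma_x(k(y))}(x)$ using the involutive identity $\tau_y(x)=\sigma^{-1}_{\sigma_x(y)}(x)$ from Remark~1.1, then substitutes the first-coordinate condition to conclude $k(\tau_y(x))=\tau_y(x)$, and finally invokes non-degeneracy (surjectivity of the $\tau$'s) so that every element of $X$ is of the form $\tau_y(x)$ and hence fixed by $k$. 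Your argument buys two things: it is more elementary (one line, no appeal to the involutive identity), and it shows the statement is actually more general than stated --- the involutive hypothesis is never used, and even full non-degeneracy is overkill, since injectivity of the maps $\sigma_x$ suffices. The paper's longer route has no compensating advantage here; it simply threads the argument through the same involutive machinery used elsewhere in that section. Your observation that the second coordinate condition is redundant is accurate and worth stating explicitly, as you did.
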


\begin{proof}
	We only need to prove the non-trivial implication. The assumption is
	equivalent to $\sigma_x(y)=\sigma_x(k(y))$ and
	$k(\tau_y(x))=\tau_{k(y)}(x)$ for all $x,y\in X$. Then
	\[
	k(\tau_y(x))=\tau_{k(y)}(x)=\sigma^{-1}_{\sigma(k(y))}(x)=\sigma^{-1}_{\sigma(y)}(x)=\tau_y(x)
	\]
	and the claim follows since $(X,r)$ is non-degenerate.
\end{proof}

\begin{pro}
	\label{pro:Agata1}
	Let $(X,r)$ be a non-degenerate solution and $k\colon X\to X$ be such that
	either $k_2r=r^{-1}k_1$ or $k_1r^{-1}=rk_2$. Then $k$ is a reflection of
	$(X,r)$.
\end{pro}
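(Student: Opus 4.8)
The plan is to reduce both hypotheses to a single clean operator identity and then read off the reflection equation by a one-line substitution. First I would observe that, because $r$ is a bijection, each of the two hypotheses is equivalent to the \emph{same} relation
\[
	rk_2r=k_1 .
\]
Indeed, starting from $k_2r=r^{-1}k_1$ and composing on the left with $r$ yields $rk_2r=(rr^{-1})k_1=k_1$; starting from $k_1r^{-1}=rk_2$ and composing on the right with $r$ yields $k_1=(k_1r^{-1})r=rk_2r$. In either case there is nothing more to extract from the hypothesis, so it suffices to prove the statement under the single assumption $rk_2r=k_1$.

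Next I would exploit the elementary but decisive fact that $k_1$ and $k_2$ commute: since $k_1=k\times\id$ and $k_2=\id\times k$ act on independent coordinates, we have
\[
	k_1k_2=k\times k=k_2k_1 .
\]
With this in hand the reflection equation is immediate. Substituting $rk_2r=k_1$ into the left-hand side gives $rk_2rk_2=(rk_2r)k_2=k_1k_2$, and substituting it into the right-hand side gives $k_2rk_2r=k_2(rk_2r)=k_2k_1$. Since $k_1k_2=k_2k_1$, the two sides coincide, which is exactly the reflection equation $rk_2rk_2=k_2rk_2r$; thus $k$ is a reflection of $(X,r)$.

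There is essentially no obstacle in this argument; the only point requiring care is that $k$ is merely a map and need not be invertible, so every manipulation must use only the invertibility of $r$, never of $k_1$ or $k_2$. Both reductions to $rk_2r=k_1$ respect this, as does the commutation $k_1k_2=k_2k_1$, which holds for an arbitrary $k$. I would also note in passing that non-degeneracy of $(X,r)$ is used only to make sense of the notion of reflection; the verification itself is purely formal and relies solely on associativity of composition and the bijectivity of $r$.
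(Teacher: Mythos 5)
Your proof is correct and takes essentially the same route as the paper's: substitute the hypothesis into both sides of $rk_2rk_2=k_2rk_2r$ and conclude from the commutativity $k_1k_2=k_2k_1$. Your one refinement is worth keeping: the paper treats the two hypotheses as separate cases (disposing of the second with ``similarly''), whereas you observe that both are equivalent to the single identity $rk_2r=k_1$, so only one computation is needed.
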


\begin{proof}
	Let us first assume that $k_2r=r^{-1}k_1$. Then $rk_2rk_2=rr^{-1}=k_1k_2$
	and $k_2rk_2r=k_2rr^{-1}k_1=k_2k_1$. Since $k_1$ and $k_2$ commute, the
	claim follows. Similarly one proves the case where $k_1r^{-1}=rk_2$.
\end{proof}

Proposition~\ref{pro:Agata1} of course applies in the particular case of
involutive solutions.

\begin{pro}
	\label{pro:Agata2}
	Let $(X,r)$ be a non-degenerate solution and $k\colon X\to X$ be such that
	$k_2r=rk_1$ and $k_1r=rk_2$.  
	Then $k$ is a reflection of $(X,r)$.
\end{pro}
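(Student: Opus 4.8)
The plan is to prove the reflection equation $rk_2rk_2=k_2rk_2r$ directly, by reducing both sides to the common expression $r^2k_1k_2$ using the two hypotheses $k_2r=rk_1$ and $k_1r=rk_2$. The only structural fact needed beyond these relations is that $k_1$ and $k_2$ commute, which holds because $k_1=k\times\id$ and $k_2=\id\times k$ act on disjoint coordinates of $X\times X$; this is entirely analogous to the commutativity step invoked in the proof of Proposition~\ref{pro:Agata1}. Note that, in contrast to Theorem~\ref{thm:t}, neither non-degeneracy nor involutivity enters, so no appeal to Remark~\ref{rem:involutive} or Lemma~\ref{lem:basic} is required.

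For the left-hand side I would substitute $k_2r=rk_1$ into the central factor $k_2r$, giving $rk_2rk_2=r(k_2r)k_2=r(rk_1)k_2=r^2k_1k_2$. For the right-hand side I would apply $k_2r=rk_1$ first to the leading pair of factors, obtaining $k_2rk_2r=(rk_1)k_2r=rk_1k_2r$, and then once more to the trailing pair $k_2r$, which yields $rk_1(rk_1)=r(k_1r)k_1$. Applying the second hypothesis $k_1r=rk_2$ then produces $r(rk_2)k_1=r^2k_2k_1$, and commutativity of $k_1$ and $k_2$ turns this into $r^2k_1k_2$. Comparing the two reductions gives $rk_2rk_2=r^2k_1k_2=k_2rk_2r$, which is precisely the reflection equation.

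Since both reductions are purely formal substitutions, I do not anticipate any genuine obstacle. The only point requiring care is the bookkeeping of composition order: at each step one must choose correctly which of the two hypotheses to apply, and to which occurrence of $k_2r$ or $k_1r$. Getting this bookkeeping right, together with the final use of $k_1k_2=k_2k_1$ to match the two normal forms, is the whole content of the argument.
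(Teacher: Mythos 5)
Your proof is correct and is essentially the paper's own argument: both reduce $rk_2rk_2$ and $k_2rk_2r$ to the common normal form $r^2k_1k_2$ by substituting the relations $k_2r=rk_1$ and $k_1r=rk_2$ and then using the commutativity of $k_1$ and $k_2$. The paper merely compresses the same substitutions into one line, so there is no substantive difference.
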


\begin{proof}
	We compute 
	$k_2rk_2r=rk_1k_2r=r^2k_2k_1=r^2k_1k_2$
	and similarly $rk_2rk_2=r^2k_1k_2$.
\end{proof}

Let $(X,r)$ be a non-degenerate involutive solution to the YBE; it is known that $X$ is a subset of some brace $A$ such that $X$ generates $A$ as an additive group and  restriction $r = r_{A}|_{X\times X}$ is a solution $(X,r)$ to the YBE. This statement is implicit in \cite{MR2278047} and explicit in Theorem 4.4 of \cite{MR3177933}.
In particular $A$ can be taken to be the structure group of $(X,r)$ (see \cite{MR3177933} for more information). Moreover, if $X$ is finite, $A$ can be assumed to be finite.
	
\begin{thm}
	\label{thm:k2r=rk1}
  Let $A$ be a left brace and $X\subseteq A$ be a subset such that the restriction $r = r_{A}|_{X\times X}$  is a solution $(X,r)$ to the YBE. Assume that $X$ generates the additive group of $A$. Let $k\colon
	X\to X$ be a map.  The following statements are equivalent:
	\begin{enumerate}
		\item $k_2r=rk_1$.
		\item $k_1r=rk_2$.
		\item $k(x)-x\in\Soc(A)$ for all $x\in X$ and $k$ is $\mathcal{G}(X,r)$-equivariant.
	\end{enumerate}
\end{thm}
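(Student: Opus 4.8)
The plan is to prove the two equivalences $(1)\Leftrightarrow(2)$ and $(1)\Leftrightarrow(3)$, which together give the statement. The equivalence $(1)\Leftrightarrow(2)$ is purely formal and uses only that the restricted solution is involutive: since $r_A$ is involutive and maps $X\times X$ into itself, $r^2=\id$ on $X\times X$, and $k_1,k_2$ commute. From $k_2r=rk_1$ one gets $k_2=rk_1r$ by composing on the right with $r$, and then $rk_2=r^2k_1r=k_1r$ by composing on the left with $r$; this is $(2)$, and the converse is identical by symmetry.

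For $(1)\Leftrightarrow(3)$ I would first record the coordinate form of the conditions. Writing $r(x,y)=(\lambda_x(y),\mu_y(x))$, condition $(1)$ splits into a first-coordinate identity $\lambda_{k(x)}(y)=\lambda_x(y)$ and a second-coordinate identity $k(\mu_y(x))=\mu_y(k(x))$ for all $x,y\in X$. Because $X$ generates $(A,+)$ and the maps $\lambda_x$ are additive, the first identity is equivalent to $\lambda_{k(x)}=\lambda_x$ on all of $A$. The key lemma is then that for $a,b\in A$ one has $\lambda_a=\lambda_b$ if and only if $a-b\in\Soc(A)$: if $s=a-b\in\Soc(A)$ then $a=b+s=s+b=s\circ b$ using $s\circ c=s+c$, so $\lambda_a=\lambda_s\lambda_b=\lambda_b$; conversely, if $\lambda_a=\lambda_b$ then $t:=b'\circ a\in\Soc(A)$ since $\lambda_t=\lambda_b^{-1}\lambda_a=\id$, whence $a=b\circ t=b+\lambda_b(t)$ and $a-b=\lambda_b(t)=t+b*t\in\Soc(A)$ because $\Soc(A)$ is a left ideal. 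Thus the first-coordinate part of $(1)$ is exactly the condition $k(x)-x\in\Soc(A)$ of $(3)$.

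It remains to match the second-coordinate part of $(1)$ with $\mathcal{G}(X,r)$-equivariance, and here I would use the non-degenerate involutive identity $\mu_y(x)=\lambda^{-1}_{\lambda_x(y)}(x)$ from Remark~\ref{rem:involutive}. Assuming $(3)$: by the lemma $\lambda_{k(x)}=\lambda_x$, so $\mu_y(k(x))=\lambda^{-1}_{\lambda_x(y)}(k(x))$, while $k(\mu_y(x))=k(\lambda^{-1}_{\lambda_x(y)}(x))=\lambda^{-1}_{\lambda_x(y)}(k(x))$ because $\lambda_x(y)\in X$ forces $\lambda^{-1}_{\lambda_x(y)}\in\mathcal{G}(X,r)$ and $k$ is equivariant; this yields the second-coordinate identity, so $(3)\Rightarrow(1)$. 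Assuming $(1)$: the same identity turns the second-coordinate part into $k(\lambda^{-1}_{\lambda_x(y)}(x))=\lambda^{-1}_{\lambda_x(y)}(k(x))$ for all $x,y$, and given generators $z,w\in X$ I would set $x=\lambda_z(w)$ and $y=\lambda_x^{-1}(z)$, so that $\lambda_x(y)=z$ and $\lambda^{-1}_{\lambda_x(y)}(x)=w$, recovering $k(\lambda_z(w))=\lambda_z(k(w))$, i.e. equivariance.

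The main obstacle is this last substitution: it requires $y=\lambda_x^{-1}(z)$ to lie in $X$, which is exactly where non-degeneracy of $(X,r)$ is essential, since it guarantees that each $\lambda_x$ restricts to a bijection of $X$ and hence that $\lambda_x^{-1}(z)\in X$. Once the lemma $\lambda_a=\lambda_b\Leftrightarrow a-b\in\Soc(A)$ and the identity $\mu_y(x)=\lambda^{-1}_{\lambda_x(y)}(x)$ are isolated as the two computational inputs, everything else reduces to bookkeeping between the two coordinates.
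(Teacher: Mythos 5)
Your proof is correct, and part of it takes a genuinely different (and cleaner) route than the paper's. For $(1)\Leftrightarrow(2)$ the paper works in coordinates: it expands both conditions via the involutive identity $\mu_y(x)=\lambda^{-1}_{\lambda_x(y)}(x)$, substitutes $y=\lambda^{-1}_x(z)$ to deduce $\lambda^{-1}_{k(z)}(x)=\lambda^{-1}_z(x)$, and then checks the remaining coordinate. Your observation that this equivalence is purely formal --- conjugating $k_2r=rk_1$ by the involution $r$ yields $k_1r=rk_2$ --- is shorter, and moreover proves the equivalence for any map $k$ on any involutive solution, with no brace structure, no generation hypothesis, and no non-degeneracy. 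The trade-off is that the paper's coordinate computation does double duty: the identity $\lambda_{k(z)}=\lambda_z$ extracted there is reused in its proof that $(1)$ and $(2)$ imply $(3)$, so you must redo that analysis inside your direct proof of $(1)\Leftrightarrow(3)$. There the two arguments converge on the same ingredients: your isolated lemma ($\lambda_a=\lambda_b$ if and only if $a-b\in\Soc(A)$) is exactly what the paper expresses, rather opaquely, as ``$z=k(z)$ in the quotient by the socle''; both use additivity of the $\lambda$-maps together with the hypothesis that $X$ generates $(A,+)$; and both use the substitution $x=\lambda_z(w)$, $y=\lambda^{-1}_x(z)$, which, as you correctly flag, requires non-degeneracy of $(X,r)$ so that $\lambda^{-1}_x(z)\in X$ --- a hypothesis the paper uses silently (legitimately so, since the theorem is only applied to non-degenerate restrictions of $r_A$, as in the paragraph preceding it). Finally, your decomposition into $(1)\Leftrightarrow(2)$ and $(1)\Leftrightarrow(3)$ is tidier than the paper's chain $(2)\Rightarrow(1)$, $(1)\wedge(2)\Rightarrow(3)$, $(3)\Rightarrow(2)$, at no cost in rigor.
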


\begin{proof}
	Let us first prove that $(1)$ and $(2)$ are equivalent. Assume first that
	(2) holds.  This means that $\lambda_x(k(y))=k(\lambda_x(y))$ and
	$\mu_{k(y)}(x)=\mu_y(x)$ hold for all $x,y\in X$. Since 
	\[
	\lambda^{-1}_{k(\lambda_x(y))}(x)=\lambda^{-1}_{\lambda_x(k(y))}(x)=\mu_{k(y)}(x)=\mu_y(x)=\lambda^{-1}_{\lambda_x(y)}(x),
	\]
	holds for all $x,y\in X$, by writing $y=\lambda^{-1}_x(z)$ one concludes
	that $\lambda^{-1}_{k(z)}(x)=\lambda^{-1}_z(x)$ for all $x,z\in X$. Now 
	\[
	\mu_y(k(x))=\lambda^{-1}_{\lambda_{k(x)}(y)}(k(x))=\lambda^{-1}_{\lambda_x(y)}(k(x))=k(\lambda^{-1}_{\lambda_x(y)}(x))=k(\mu_y(x)) 
	\]
	and hence (1) holds. The implication $(1)\implies(2)$ is similar.

	Now we prove that (1) and (2) imply (3). 
	Let $x,z \in X$ and $y \in X$ be such that $z = \lambda _{x}(y)$. By (2), $\lambda_{z}(y)=\lambda _{k(z)}(y)$  for every $z,y\in X$. Since $z*x = k(z)*x$ and $X$ generates additively the group $A$, it follows that $z *g = k(z)*g$ for all $g \in A$. Thus $\lambda _{z}=\lambda _{k(z)}$ and hence, in the quotient $G(X,r)/Soc(G(X,r))$, one has $z = k(z)$. Notice that this implies $z'=k(z)'$ in $G(X,r)/Soc(G(X,r))$.  Moreover, 
	\begin{align*}
		k(z*x+x)&=k\mu_y(x)=\mu_y(k(x))\\
		&=\lambda_{k(x)}(y)'*k(x)+k(x)\\
		&=\lambda_x(y)'*k(x)+k(x)=z*k(x)+k(x).
	\end{align*}

	Finally we prove that $(3)\implies(2)$. We need to show that both 
	\[
		\lambda_x(k(y))=k(\lambda_x(y)),\quad 
		\mu_{k(y)}(x)=\mu_y(x)
	\]
	hold for all
	$x,y\in X$.  
	
	Notice that $\mu_{k(y)}(x)=\mu_y(x)$ holds because $k(y)-y\in
	\Soc(A)$ (by the definition of $\mu $). By the definition
	of $k$ we have $k(\lambda_x(y))=\lambda _{x}(k(y))$, which concludes the
	proof. 
\end{proof}

The following example fits in the context of Theorem ~\ref{thm:k2r=rk1}.

\begin{exa}
Let $(A, +, *)$ be a nilpotent ring and $(A, +, \circ )$ be the corresponding
left brace, so $a\circ b=a*b+a+b$ for $a, b\in A$.  Let  $(A, r_{A})$ be the
associated solution of the YBE and $k(x) = x +
x^{n-1}$. Assume that $A^n=0$. Then 
\begin{align*}
&k(x)-x=x^{n-1}\in Soc (A),\\
&k(\lambda _{x}(y))=(x*y+y)+(x*y+y)^{n-1}=x*y+y+y^{n-1},\\
&\lambda _{x}(k(y))=\lambda _{x}(y+y^{n-1})=x*y+y+y^{n-1}.
\end{align*}
and therefore $r_Ak_{2} = k_{2}r_A$ by
Theorem~\ref{thm:k2r=rk1}.
\end{exa}

The following example shows that there are reflections that are {\it not} of the type covered by 
Theorem~\ref{thm:k2r=rk1}.

\begin{exa}
Let $A$ be a ring  generated by one generator $b$ subject to relation $b^4=0$, so $A$
is a nilpotent ring. Suppose that $x+x=0$ for every $x\in A$. Let $X = \{b + bf : f \in A\}$, and let $r$ be a
restriction of the Yang--Baxter map associated to $A$. Observe that any element $x\in X$  can be written 
 as $x=b+i \cdot b^{2}+j\cdot b^{3}+ \langle b^{4}\rangle$ for some $i , j \in \Z_{2}$ - for simplicity we will write $x=b+i\cdot b^{2}+j\cdot b^{3}$. Let $x, y\in X$; then  
$x = b + ib^2 + jb^{3}$ and $y = b + mb^2 + nb^{3}$ for some 
$i,j, m,n \in \Z_2$.  By using formula \eqref{eq:rdef} for the map $r$ associated to the brace $A$ we get $r(x, y)=(xy+y, zx+x)$ where $z(xy+y)+z+(xy+y)=0$. Therefore, 
\[
r(x,y)=(b + (m+1)b^2 + (i+m+n)b^{3}, b + (i+1)b^2 + (m+i+j)b^{3}).
\]  
 Define $k(x)=x+x^{2}$ for every $x\in X$. It follows that 
\[
k(b + ib^2 + jb^{3})=b + (i+1)b^2 + jb^{3}.\]

Then
\begin{align*}
k_{2}rk_{2}r(x,y)&=( b + (i+1)b^2 + (j+1)b^{3},  b + (m+1)b^2 +
(n+1)b^{3})\\
&=rk_{2}rk_{2}(x,y).
\end{align*}
Note that $k(x)-x=x^{2}\notin \Soc (A)$.
\end{exa}

\label{braces}

\section{Reflections from factorizable groups}
\label{factorization}

An obvious question is how the solutions of the YBE and reflection equation
that come from braces relate to previously known solutions. In this section we
consider the skew brace description of the YBE solutions of Weinstein and Xu,
and consider associated solutions of the reflection equation.  These YBE
solutions are also related to the triangular Hopf algebras obtained by Beggs,
Gould and Majid \cite{BGM96}

In~\cite{MR1178147} Weinstein and Xu produced set-theoretic solutions to the
YBE by using factorizable groups. Using the language of skew braces we use
group factorizations to construct reflections.

A skew left brace is a non-abelian generalization of a left brace \cite{MR3647970}. More precisely it is a set with two compatible group structures. One of these groups is known as the multiplicative group; the other as the additive group. The terminology used in the theory of Hopf-Galois extensions suggests that the additive group determines the type of the skew left brace. For example, skew left braces of abelian type are Rump braces, that is,  braces with an abelian additive group. A skew left brace is a triple $(A,+,\circ )$, where $(A,+)$ and $(A,\circ $) are (not necessarily abelian) groups such that the compatibility 
$a\circ (b + c)=a\circ b -a + a\circ c$ holds for all $a,b,c \in A$. For a skew left brace $A$, the map $r_{A}: A\times A \rightarrow  A\times A$, $r_{A}(a,b)=(-a + a\circ b,(-a + a\circ b)' \circ a\circ b)$  is a non-degenerate set-theoretic solution of the Yang--Baxter equation. We write $a'$ to denote the inverse of $a$ with respect to the circle operation $\circ $.

We say that an additive (and not necessarily abelian) group $G$ admits an exact
factorization through subgroups $A$ and $B$ if 
\[
G=A+B=\{a+b:a\in A,b\in B\}
\]
and $A\cap B=\{0\}$. This means that for $x\in G$ there are unique $a\in A$ and
$b\in B$ such that $x=a+b$. 

By~\cite[Theorem 3.3]{MR3763907} the group $G$ with circle operation $x\circ
y=a+y+b$ whenever $x=a+b\in AB$ is a skew left brace. By~\cite{MR3647970}, the
pair $(G,r_G)$, where
\[
	r_G\colon G\times G\to G\times G,\quad
	r_G(x,y)=(\lambda_x(y),\mu_y(x))
\]
is a non-degenerate solution to the YBE. Note that 
$x\circ y=\lambda_x(y)\circ \mu_y(x)$ for all $x,y\in G$. Moreover, if 
$x=a+b$, then 
\[
	\lambda_x(y)=-x+x\circ y=-b-a+a+y+b=-b+y+b
\]
for all $y\in G$.  We collect some useful formulas in the following lemma:

\begin{lem}
	Let $G$ be a group that admits an exact factorization through subgroups $A$
	and $B$ and $z=cd\in AB$ for $c\in A$ and $d\in B$ central elements of $G$.
	Then 
	\begin{align}
		&\lambda_{xz}(y)=\lambda_x(y),
		&& \lambda_x(yz)=\lambda_x(y)z,\\
		& x\circ (yz)=(a\circ b)z, 
		&& (xz)\circ y=(x\circ y)z
	\end{align}
	for all $x,y\in G$.
\end{lem}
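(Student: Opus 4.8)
The plan is to reduce all four identities to two elementary facts already recorded in the excerpt: the explicit formulas $x\circ y=a+y+b$ and $\lambda_x(y)=-b+y+b$, valid for the exact factorization $x=a+b$ with $a\in A$ and $b\in B$, together with the centrality of $c$ and $d$. Throughout I write $z=c+d$ (the element denoted $cd$ in the statement), with $c\in A$ and $d\in B$ central in $G$; centrality means that $c$, $d$, hence $z$ and all their inverses, commute with every element of $G$, and this is the only property of $z$ I will use.

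First I would compute the exact factorization of $xz$. Writing $x=a+b$, we have $xz=a+b+c+d$, and since $c$ is central it commutes with $b$, so $xz=(a+c)+(b+d)$ with $a+c\in A$ and $b+d\in B$. By uniqueness of the exact factorization this is the $A$--$B$ decomposition of $xz$: its $A$-part is $a+c$ and its $B$-part is $b+d$. This single computation drives the two identities that involve $xz$.

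Next I would verify the four formulas one at a time. For $\lambda_{xz}(y)=\lambda_x(y)$ I substitute the $B$-part $b+d$ of $xz$ into $\lambda$, obtaining $\lambda_{xz}(y)=-(b+d)+y+(b+d)$; centrality of $d$ lets it cancel, leaving $-b+y+b=\lambda_x(y)$. For $\lambda_x(yz)=\lambda_x(y)z$ I expand $\lambda_x(yz)=-b+y+c+d+b$ and push the central element $z=c+d$ past $b$ to reach $-b+y+b+c+d=\lambda_x(y)z$. The same pattern handles the circle identities: reading the third display as $x\circ(yz)=(x\circ y)z$, one has $x\circ(yz)=a+y+c+d+b=(a+y+b)+c+d=(x\circ y)z$ after moving $c$ and $d$ through $b$; and $(xz)\circ y=(a+c)+y+(b+d)$ collapses to $(a+y+b)+c+d=(x\circ y)z$ after moving $c$ through $y$ and $b$.

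Each calculation is a one-line rearrangement, so the main thing to watch is the bookkeeping forced by a possibly non-abelian additive group: inverses reverse order, so $-(b+d)=-d-b$, and every transposition must be justified by the centrality of $c$ or $d$, never by commutativity of $+$. The only genuinely structural step, and the one I expect to be the crux, is the identification of the $A$- and $B$-parts of $xz$ through uniqueness of the factorization; once that is in hand, the four formulas follow purely by moving central elements.
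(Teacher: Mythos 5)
Your proof is correct and follows essentially the same route as the paper's: factor $xz$ as $(a+c)+(b+d)$ using centrality of $c$, then move the central elements $c$ and $d$ through the explicit formulas for $\lambda$ and $\circ$. The paper proves only the first identity (in multiplicative notation) and declares the rest similar, whereas you carry out all four in additive notation and, correctly, read the misprinted third identity $x\circ(yz)=(a\circ b)z$ as $x\circ(yz)=(x\circ y)z$.
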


\begin{proof}
	Write $x=ab$ with $a\in A$ and $b\in B$. Then
	\[
	\lambda_{xz}(y)=(bd)^{-1}y(bd)=b^{-1}yb=\lambda_x(y).
	\]
	The other formulas are proved similarly. 
\end{proof}

Now we prove the main result in this section:

\begin{thm}
	Let $G$ be a group that admits an exact factorization through subgroups $A$
	and $B$ and let $(G,r_G)$ be its associated solution to the YBE.  For elements 
	 $c\in A$ and $d\in B$ which are central in $G$ and such that $cd$ is central in $G$, 
	 the map $k(x)=x(cd)$ is a reflection of $(G,r_G)$.
\end{thm}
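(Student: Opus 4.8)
The plan is to verify that $k(x)=x(cd)=xz$ (writing $z=cd$) is a reflection of $(G,r_G)$ by appealing to the general criteria already established for skew braces. Since $(G,r_G)$ is a \emph{non-degenerate} solution but not necessarily involutive, Theorem~\ref{thm:t} does not directly apply; instead I would aim to show that $k$ satisfies the hypotheses of Proposition~\ref{pro:Agata2}, namely that $k_2r=rk_1$ and $k_1r=rk_2$ both hold. Equivalently, in coordinates, I must establish
\begin{align*}
\lambda_x(k(y))=k(\lambda_x(y)),\qquad
\mu_{k(y)}(x)=\mu_y(x),\qquad
\lambda_{k(x)}(y)=k(\lambda_x(y)),\qquad
\mu_y(k(x))=\mu_y(x).
\end{align*}

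**Reducing to the lemma's formulas.**
The key tool is the preceding Lemma, whose four identities were tailored for exactly this purpose: $\lambda_{xz}(y)=\lambda_x(y)$, $\lambda_x(yz)=\lambda_x(y)z$, $x\circ(yz)=(x\circ y)z$, and $(xz)\circ y=(x\circ y)z$. First I would compute $\lambda_x(k(y))=\lambda_x(yz)=\lambda_x(y)z=k(\lambda_x(y))$, which is immediate from the second formula and gives the $\lambda$-equivariance needed for $k_2r=rk_1$. Dually, $\lambda_{k(x)}(y)=\lambda_{xz}(y)=\lambda_x(y)=k(\lambda_x(y))\cdot z^{-1}\cdot z$; here I must be slightly careful, since the first lemma formula says $\lambda_{xz}(y)=\lambda_x(y)$ with \emph{no} trailing $z$, so the correct reading is that $\lambda_{k(x)}$ ignores the insertion of $z$ on the left. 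This asymmetry is precisely why both mixed equations need separate checking.

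**Handling the $\mu$-coordinates.**
The heart of the verification is the claim $\mu_{k(y)}(x)=\mu_y(x)$ and $\mu_y(k(x))=\mu_y(x)$. Recall $\mu_y(x)$ is determined by the factorization $x\circ y=\lambda_x(y)\circ\mu_y(x)$, so I would insert $z$ into the relation $x\circ y=\lambda_x(y)\circ\mu_y(x)$ and use the multiplicative formulas of the Lemma. For instance, $x\circ k(y)=x\circ(yz)=(x\circ y)z=\big(\lambda_x(y)\circ\mu_y(x)\big)z=\lambda_x(y)\circ\big(\mu_y(x)z\big)$; combined with $\lambda_x(k(y))=\lambda_x(y)z$, uniqueness of the $\lambda$/$\mu$ decomposition forces $\mu_{k(y)}(x)=\mu_y(x)$ (the trailing $z$ gets absorbed consistently into the $\lambda$-part, which is where the bookkeeping must be done carefully). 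The analogous computation with $z$ inserted on the left, $(xz)\circ y=(x\circ y)z$, yields $\mu_y(k(x))=\mu_y(x)$.

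**Main obstacle.**
The only real subtlety is keeping the placement of $z$ straight: because $z$ is central the products commute, but the two group operations $+$ and $\circ$ interact differently with the left- versus right-insertion of $z$, and the lemma's four formulas are not symmetric in their treatment of $z$. The danger is conflating $k_2r=rk_1$ with $k_1r=rk_2$ or mishandling which coordinate of $r_G$ absorbs the central factor. Once the four coordinate identities are checked against the lemma, Proposition~\ref{pro:Agata2} closes the argument immediately, since $k_2rk_2r=rk_1k_2r=r^2k_2k_1=r^2k_1k_2=rk_2rk_2$. I would therefore structure the proof as: invoke the lemma for the four identities, verify the $\mu$-coordinates by the uniqueness of exact factorization, and conclude via Proposition~\ref{pro:Agata2}.
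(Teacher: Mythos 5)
Your overall plan --- prove both $k_2r=rk_1$ and $k_1r=rk_2$ and then invoke Proposition~\ref{pro:Agata2} --- is sound and is essentially the paper's route. The genuine problem is in your coordinate translation of $k_2r=rk_1$. Since
\[
k_2r(x,y)=\bigl(\lambda_x(y),\,k(\mu_y(x))\bigr),
\qquad
rk_1(x,y)=\bigl(\lambda_{k(x)}(y),\,\mu_y(k(x))\bigr),
\]
the relation $k_2r=rk_1$ says $\lambda_{k(x)}(y)=\lambda_x(y)$ and $\mu_y(k(x))=k(\mu_y(x))$. Your third and fourth identities, $\lambda_{k(x)}(y)=k(\lambda_x(y))$ and $\mu_y(k(x))=\mu_y(x)$, instead encode $rk_1=k_1r$, and for $k(x)=xz$ both are \emph{false}: the lemma preceding the theorem gives $\lambda_{k(x)}(y)=\lambda_{xz}(y)=\lambda_x(y)$, whereas $k(\lambda_x(y))=\lambda_x(y)z$, so your third identity would force $z$ to be the identity; and from $(xz)\circ y=(x\circ y)z=\lambda_x(y)\circ\bigl(\mu_y(x)z\bigr)$ together with $\lambda_{xz}(y)=\lambda_x(y)$ and cancellation in the group $(G,\circ)$ one gets $\mu_y(k(x))=\mu_y(x)z=k(\mu_y(x))$, not $\mu_y(x)$. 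Your attempted verifications mask this: the step ``$\lambda_{k(x)}(y)=\lambda_x(y)=k(\lambda_x(y))\cdot z^{-1}\cdot z$'' is incorrect because its right-hand side simply equals $k(\lambda_x(y))=\lambda_x(y)z$, and the uniqueness-of-factorization argument you invoke for the fourth identity actually produces the trailing factor $z$ displayed above.

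The repair can go two ways. One: keep the direct strategy but verify the \emph{correct} identities, namely $\lambda_{k(x)}(y)=\lambda_x(y)$, $\mu_y(k(x))=k(\mu_y(x))$, $\lambda_x(k(y))=k(\lambda_x(y))$ and $\mu_{k(y)}(x)=\mu_y(x)$; all four hold by the lemma and the cancellation argument, and Proposition~\ref{pro:Agata2} then finishes the proof. (Your treatment of the last two identities, i.e.\ of $k_1r=rk_2$, is essentially right, modulo making explicit the use of $(uz)\circ v=(u\circ v)z$ to move $z$ from the $\mu$-slot to the $\lambda$-slot before cancelling.) Two: do what the paper does --- verify only $k_1r=rk_2$ directly, then obtain $k_2r=rk_1$ for free from the equivalence of statements (1) and (2) in Theorem~\ref{thm:k2r=rk1}, which applies since $X=G$ generates the additive group, and conclude with Proposition~\ref{pro:Agata2}. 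Either way the theorem stands, but as written your proof asserts and ``verifies'' two identities that are false.
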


\begin{proof}
	Let $x=ab\in G$ with $a\in A$ and $b\in B$ and $y\in G$.
	We claim that $x\circ y=\lambda_x(y)\circ \mu_{yz}(x)$. Indeed, 
	\begin{align*}
		(x\circ y)z &= x\circ (yz) 
		= x\circ k(y)\\
		&=\lambda_x(yz)\circ \mu_{yz}(x)
		=(\lambda_x(y)z)\circ \mu_{yz}(x)
		=(\lambda_x(y)\circ \mu_{yz}(x))z.
	\end{align*}
	Therefore $\mu_{yz}(x)=\mu_y(x)$. This implies that 
	\begin{align*}
		rk_2(x,y)&=r(x,k(y))
		=r(x,yz)\\
		&=(\lambda_x(yz),\mu_{yz}(x))
		=(\lambda_x(y)z,\mu_y(x))
		=k_1r(x,y).
	\end{align*}
	By Theorem~\ref{thm:k2r=rk1}, $rk_1=k_2r$. Hence $k$ is a reflection by
	Proposition~\ref{pro:Agata2}.
\end{proof}

\label{factorizable}
\section{Parameter-dependent solutions}\label{paramdepsolns}
In the application to quantum integrable systems, the interest is usually in solutions to the following parameter-dependent quantum Yang--Baxter and reflections equations
\begin{align}
&\big(\mathfrak{r}(u)\otimes
\id\big)\big(\id\otimes \mr(u+v)\big)\big(\mr(v)\otimes \id\big)\nonumber\\&= \big(\id\otimes \mr(v)\big)\big(\mr(u+v)\otimes
\id)(\id\otimes \mr(u)\big),\label{eq:paramyb}\\		
&\big(\id\times \mk(v)\big)\mr(u+v) \big(\id\times \mk(u)\big)
 \mr(u-v)\nonumber\\
&=\mr(u-v) \big(\id\times \mk(u)\big) \mr(u+v) \big(\id\times \mk(v)\big),
\label{eq:parambyb}
\end{align}
where $u$ and $v$ are in $\mathbb{C}$. Here $\mr(u):V\otimes V \rightarrow V\otimes V$ and $\mk(u):V\rightarrow V$, where $V=\mathbb{C} X$ is the vector space spanned by elements of $X$.
These parameter dependent solutions $\mr(u)$ and $\mk(u)$ are used to define {\it transfer matrices} which are certain maps $T(u):V^{\otimes N} \rightarrow V^{\otimes N}$. The details may be found in the paper \cite{MR953215}, but the key point to note is that the properties \eqref{eq:paramyb} and \eqref{eq:parambyb} lead to the result that $T(u) T(v)= T(v) T(u)$ for all $(u,v)\in \mathbb{C}\times \mathbb{C}$. This commutativity is the key defining property of a quantum integrable system. In this section, we describe a very simple way in which rational 
parameter dependence may be introduced into our $r$ and $k$ matrices. 

Let $(X,r)$ be an involutive non-degenerate set-theoretic solution and write 
\[
r(x,y)=(\sigma _{x}(y), \tau _{y}(x)),\quad x,y\in X.
\]

Let $V=\mathbb C X$ be the linear space over the field of complex numbers spanned by
elements from $X$. For $u\in \mathbb C$ let 
\[
R(u):V\otimes V\rightarrow V\otimes V,
\quad
R(u)=\id +u r,
\]
where $r:V\otimes V\rightarrow V\otimes V$ is
defined by $r(x\otimes y)=\sigma _{x}(y)\otimes \tau _{y}(x)$ for $x,y\in X$. 

Notice that $R(u)$ is a rational solution of the (parameter dependent)     
Yang--Baxter equation \[
	(R(u)\otimes
\id)(\id \otimes R(u+v))(R(v)\otimes \id)= (\id\otimes R(v))(R(u+v)\otimes
\id)(\id\otimes R(v)).\]

Let $k: X\rightarrow X$ be a function (for example a reflection on $X$). We can extend this function linearly to
a linear map $k\colon V\to V$. For a map $k:V\rightarrow V$ and  fixed $u\in \mathbb C$ define map $K(u):V\rightarrow  V$ by $K(u)(x)=uk(x)$ for $x\in X$.

\begin{thm}
    \label{thm:parameter}
    Let $(X,r)$ be an involutive non-degenerate solution to the YBE, $k:X\rightarrow X$ be an involutive reflection of $(X,r)$, 
    $K_{1}(u) =K(u)\otimes id$ and $K_{2}(u) = id \otimes K(u)$.
    Then
    \[
    K_{2}(v)R(u+v)K_{2}(u)R(u-v)=R(u-v)K_{2}(u)R(u+v)K_{2}(v).
    \] 
\end{thm}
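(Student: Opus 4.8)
The plan is to substitute the explicit forms $K_2(u)=u(\id\otimes k)$ and $R(w)=\id+wr$ into both sides, pull out the common scalar factor, and reduce the claimed identity to the linearly extended reflection equation. To set up notation I would write $k_2=\id\otimes k$ for the linear extension of $\id\times k$ to $V\otimes V$, so that the operator appearing in the statement is $K_2(u)=\id\otimes K(u)=u\,k_2$, the scalar $u$ commuting with every linear map and hence being free to move to the front of any product.

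Before computing I would record three elementary facts. First, $K_2(u)=u\,k_2$ and $K_2(v)=v\,k_2$, so each side of the desired equation carries a common scalar $uv$ that may be factored out; concretely, the left-hand side equals $uv\,k_2R(u+v)k_2R(u-v)$ and the right-hand side equals $uv\,R(u-v)k_2R(u+v)k_2$. Second, since $k$ is involutive we have $k_2^2=\id\otimes k^2=\id$ on $V\otimes V$. Third, both $r$ and $k_2$ are linear extensions of maps of the underlying set $X\times X$ (with $r$ sending the basis vector $x\otimes y$ to the basis vector $\sigma_x(y)\otimes\tau_y(x)$ and $k_2$ sending $x\otimes y$ to $x\otimes k(y)$); hence composition of linear extensions is the linear extension of the composition, and the set-theoretic reflection equation for $k$ lifts to the operator identity $k_2rk_2r=rk_2rk_2$ on $V\otimes V$.

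With these in hand the computation is routine, so it suffices to prove $k_2R(u+v)k_2R(u-v)=R(u-v)k_2R(u+v)k_2$. Expanding $R(u+v)=\id+(u+v)r$ and $R(u-v)=\id+(u-v)r$, and using $k_2^2=\id$ to collapse the inner factor as $k_2(\id+(u+v)r)k_2=\id+(u+v)k_2rk_2$, I would find that both sides equal $\id+(u-v)r+(u+v)k_2rk_2$ together with one term that is quadratic in the parameters: the left side contributes $(u^2-v^2)\,k_2rk_2r$ while the right side contributes $(u^2-v^2)\,rk_2rk_2$. All remaining terms match coefficient by coefficient, so the difference of the two sides is exactly $(u^2-v^2)\bigl(k_2rk_2r-rk_2rk_2\bigr)$.

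The single genuine input, and the only place the hypothesis on $k$ is used, is the vanishing of this bracket, which is precisely the reflection equation $k_2rk_2r=rk_2rk_2$ satisfied because $k$ is a reflection of $(X,r)$. I therefore do not expect a real obstacle: the proof is a short polynomial identity in $u$ and $v$ whose top-degree coefficient is the reflection equation and whose lower-degree coefficients agree automatically. The two points that require care rather than difficulty are, first, noting that involutivity of $k$ is essential, since it is exactly what forces $k_2^2=\id$ and makes the constant and mixed terms collapse and agree; and second, justifying that the set-theoretic reflection equation passes to the operator identity on $V\otimes V$, which is immediate because $r$ and $k_2$ are linear extensions of maps of the basis set $X\times X$.
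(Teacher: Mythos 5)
Your proposal is correct and follows essentially the same route as the paper's proof: factor out the common scalar $uv$ to reduce the claim to $k_2R(u+v)k_2R(u-v)=R(u-v)k_2R(u+v)k_2$, expand both sides using $R(w)=\id+wr$, and observe that all terms agree once $k_2^2=\id$ (involutivity) and $k_2rk_2r=rk_2rk_2$ (the lifted set-theoretic reflection equation) are invoked. Your explicit remark that composition of linear extensions is the linear extension of the composition is a point the paper leaves implicit, but it is the same argument.
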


\begin{proof} 
 To prove the claim 
  it is sufficient to show that 
  \begin{align*}
   k_{2}R(u+v)k_{2}R(u-v)=R(u-v)k_{2}R(u+v)k_{2}.
  \end{align*}
 By using the fact that $R(u)=\id+ur$ we get that
  for each $x,y\in X$ we have 
  \begin{align*}
	k_{2}R(u+v)k_{2}R(u-v)&=k_{2}(\id+(u+v) r)k_{2}(\id+(u-v) r)\\
	&\hspace*{-15mm}=k_{2}k_{2}+(u-v)k_{2}k_{2}r+(u+v)k_{2}rk_{2}+(u+v)(u-v) k_{2}rk_{2}r
	\shortintertext{and}
	R(u-v)k_{2}R(u+v)k_{2}&=(\id+(u-v) r)k_{2}(\id+(u+v) r)k_{2}\\
	&\hspace*{-15mm}=k_{2}k_{2}+(u-v)rk_{2}k_{2}+(u+v)k_{2}rk_{2}+(u+v)(u-v)rk_{2}r k_{2}.
  \end{align*}
 Since $k$ is an involution, $k_{2}k_{2}r= r=rk_{2}k_{2}$. Moreover, 
 $rk_{2}rk_{2}=rk_{2}rk_{2}$. This concludes the
 proof. 
\end{proof} 

\begin{rem}
\label{rem:parameter}
In Theorem~\ref{thm:parameter} 
we can alternatively take 
\[
K(u)(x) = f(u)k(x),
\]
where $f:\C\rightarrow\C$ is an arbitrary function. 
\end{rem}


Some solutions to the reflection equation constructed earlier in this paper are involutive.
We now present three examples of involutive reflections:

\begin{example}
	Let $A$ be a nilpotent ring, $X\subseteq A$ be a subset such that the
	restriction $r$ of $r_A$ to $X\times X$ is a solution to the Yang--Baxter
	equation. Let $c\in A$ be such that $c*c=0$, define $k(x) = x + x*c$, then
	\[
		rk_{2}rk_{2} = k_{2}rk_{2}r.
	\]
	Moreover, if $a+a=0$ for every $a\in A$, then $k$ is an involution. This
	means that $k$ and $r$ give solutions to the parameter depended equation as
	in Theorem~\ref{thm:parameter}.
\end{example}

\begin{example} 
Let $A$ be a nilpotent ring 
and $X\subseteq A$ be a subset such that the
restriction $r$ of $r_A$ to $X\times X$ is a solution to the Yang--Baxter
equation. 
Define $k(x) = x + x^{n-1}$, then
$rk_{2}rk_{2} = k_{2}rk_{2}r$.  Moreover, if $a+a=0$ for every $a\in A$, then
\[
k^{2}(x)=k(x+x^{n-1})=x+x^{n-1}+x^{n-1}=x.
\]
Hence $k$ is an involution. So
$k$ and $r$ give solutions to the parameter dependent equation as in
Theorem~\ref{thm:parameter}.
\end{example}

\begin{example}
	Let $A$ be a nilpotent ring such that $a+a=0$ for every $a\in A$. Let  $I$ be an ideal of $A$
	and $g\colon A\to A$ be such that
	$g(A)\subseteq I$ and $g(a+x)=g(a)$ for all $a\in A$ and $x\in I$. Assume that $g(x)^{2}=0$ for every $x\in A$. Then $k(x)=x+x*g(x)$ is an involution and a reflection of $(A,r_A)$.  So
$k$ and $r$ give solutions to the parameter dependent equation as in
Theorem~\ref{thm:parameter}.
\end{example}

Notice that the linear mapping $K$ from Theorem~\ref{thm:parameter}, when translated to a matrix form will have exactly one non-zero entry in each column, because it comes from a set-theoretic solution to YBE.  
Since  the setting of Theorem~\ref{thm:parameter} is no longer set-theoretic, is natural to ask 
to find $K$-matrices $k:V\rightarrow V$  which have many nonzero entries in their rows and columns.
We notice that a  small  modification of Theorem~\ref{thm:aggi} gives large classes of such $K$-matrices.

\begin{thm} 
\label{thm:Agata_new}
Let $A$ be a two-sided brace, $X\subseteq A$ be a subset such that the restriction  $r=r_A|_{X\times X}$ is a solution $(X,r)$ to the YBE. 
Let $g_1,\dots,g_n\in\C$ and $f_1,\dots,f_n\colon X\to X$ be $\mathcal{G}(X,r)$-equivariant maps.
Let $V=\C X$ be the vector space with basis in the elements of $X$ and denote by
$\oplus$ the addition of $V$.
Let $k:V\to V$ defined as  
 \[
 k(x)= f _{1}(x)\cdot  g_{1} \oplus  f _{2}(x)\cdot g_{2}\oplus \ldots  \oplus f _{n}(x)\cdot  g_{n}
 \] 
for $x\in X$ and then extended by linearity to $V$. If  $k(x)$ is involutive, then $K(u)=uk$ satisfies the reflection equation 
\[
K_{2}(v)R(u+v)K_{2}(u)R(u-v)=R(u-v)K_{2}(u)R(u+v)K_{2}(v),
\] 
where $R(u)=\id+ur$ and $K_{2}(u)=id \otimes K(u)$.
\end{thm}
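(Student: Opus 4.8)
The plan is to reduce the parameter-dependent reflection equation to two bare operator identities on $V\otimes V$ and then to prove the essential one by running the argument behind Theorem~\ref{thm:invariant} bilinearly. Since $K(u)=uk$ we have $K_2(u)=u\,k_2$, so the asserted identity is $uv$ times
\[
  k_2 R(u+v) k_2 R(u-v)=R(u-v) k_2 R(u+v) k_2 .
\]
Substituting $R(w)=\id+w\,r$ and expanding both sides as polynomials in $u+v$ and $u-v$ exactly as in the proof of Theorem~\ref{thm:parameter}, the coefficients of $1$ and of $u+v$ agree automatically, while matching the coefficients of $u-v$ and of $(u+v)(u-v)$ forces the two operator equations
\[
  k_2 k_2 r = r k_2 k_2 \qquad\text{and}\qquad k_2 r k_2 r = r k_2 r k_2 .
\]
The first holds because $k$ is an involution, so $k_2 k_2=\id$. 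Thus everything reduces to showing that the linear operator $k$ satisfies the reflection equation $k_2 r k_2 r = r k_2 r k_2$; I note that this step will not use the involutivity of $k$ at all, which is consistent with the fact that involutivity is needed only for the parameter dependence.

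For the reflection equation I would proceed as follows. Because every $f_i$ is $\mathcal{G}(X,r)$-equivariant, each $f_i$ commutes (as a linear map) with every $\sigma_x$, and hence so does the linear combination $k=\sum_i g_i f_i$. Both sides of $k_2 r k_2 r = r k_2 r k_2$ are linear, so it suffices to evaluate them on a basis vector $x\otimes y$ with $x,y\in X$ and expand $k$. Writing $a_c=\sigma_x(f_c(y))$, $b_c=\tau_{f_c(y)}(x)$, $p=\sigma_x(y)$ and $q=\tau_y(x)$, a direct bilinear expansion together with the equivariance of the outermost factor and Remark~\ref{rem:involutive} (which gives $\sigma_{a_c}(b_c)=x$ and $\sigma_p(q)=x$, so that the first tensor factors collapse) yields
\[
  r k_2 r k_2 (x\otimes y)=\sum_{c,d} g_c g_d\, f_d(x)\otimes \tau_{f_d(b_c)}(a_c),
\]
\[
  k_2 r k_2 r (x\otimes y)=\sum_{c,d} g_c g_d\, f_d(x)\otimes f_c\!\left(\tau_{f_d(q)}(p)\right),
\]
the second line after relabelling one summation index so that the first factors agree. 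Hence the whole statement comes down to the scalar-free identity
\begin{equation*}
  \tau_{f_d(b_c)}(a_c)=f_c\!\left(\tau_{f_d(q)}(p)\right)\qquad\text{for all }c,d\text{ and all }x,y\in X. \tag{$\star$}
\end{equation*}

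I expect $(\star)$ to be the only real computation, and I expect the main obstacle to be precisely the off-diagonal case $c\neq d$, where two distinct equivariant maps interact. To prove $(\star)$ I would apply Remark~\ref{rem:involutive} twice. On the one hand, $b_c=\sigma_{a_c}^{-1}(x)$, so by the equivariance of $f_d$ one has $\sigma_{a_c}(f_d(b_c))=f_d(\sigma_{a_c}(b_c))=f_d(x)$, whence $\tau_{f_d(b_c)}(a_c)=\sigma_{f_d(x)}^{-1}(a_c)=\sigma_{f_d(x)}^{-1}\sigma_x(f_c(y))$. On the other hand, $q=\sigma_p^{-1}(x)$ gives $\sigma_p(f_d(q))=f_d(x)$ and hence $\tau_{f_d(q)}(p)=\sigma_{f_d(x)}^{-1}\sigma_x(y)$; applying $f_c$ and using that it commutes with $\sigma_{f_d(x)}^{-1}$ and with $\sigma_x$ yields $f_c(\tau_{f_d(q)}(p))=\sigma_{f_d(x)}^{-1}\sigma_x(f_c(y))$. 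Both sides of $(\star)$ therefore equal $\sigma_{f_d(x)}^{-1}\sigma_x(f_c(y))$, so $(\star)$ holds. The diagonal case $c=d$ is just Theorem~\ref{thm:invariant} applied to the reflection $f_c$, and the identical computation covers $c\neq d$ because it only ever uses that $f_c$ and $f_d$ each commute with the relevant maps $\sigma_x$, as in the modification of Theorem~\ref{thm:aggi} being exploited here. With $(\star)$ in hand the reflection equation follows, and combining it with $k_2 k_2=\id$ gives the full parameter-dependent identity.
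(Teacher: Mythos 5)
Your proposal is correct and follows essentially the same route as the paper: reduce the parameter-dependent identity to $k_2k_2r=rk_2k_2$ (which uses $k^2=\id$) together with the bare reflection equation $k_2rk_2r=rk_2rk_2$, then verify the latter by expanding both sides as a double sum over basis vectors and matching the two tensor factors termwise using the equivariance of the $f_i$ and the involutivity of $r$; your identity $(\star)$ is exactly the paper's key identity $d_{i,j}=q_{i,j}$ (and your $c_{i,j}=e_{i,j}$ step is theirs as well). The only difference is notational: you prove $(\star)$ by writing both sides as $\sigma^{-1}_{f_d(x)}\sigma_x(f_c(y))$ via Remark~\ref{rem:involutive}, whereas the paper expresses the same elements ring-theoretically as $(1+f_j(x))^{-1}*f_i(\sigma_x(y))$ in the unital extension $A^{1}$ — the identical computation in different dress, though your phrasing has the minor virtue of never invoking the two-sided brace structure, only non-degeneracy, involutivity, and equivariance.
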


\begin{proof}
To prove the claim it is sufficient to show that 
\[
K_{2}R(u + v)K_{2}R(u-v) = R(u-v)K_{2}R(u + v)K_{2}.
\]
By using the fact that $R(u) = id+ur$ we get that for each $x,y \in  X$ we have 
\begin{multline*}
K_{2}R(u + v)K_{2}R(u-v) = K_{2}(id + (u + v)r)K_{2}(id + (u-v)r)\\
=K_{2}K_{2} + (u-v)K_{2}K_{2}r + (u + v)K_{2}rK_{2} + (u + v)(u-v)K_{2}rK_{2}r.
\end{multline*}
Notice also that 
\begin{multline*}
R(u-v)K_{2}R(u + v)K_{2} = (id + (u-v)r)K_{2}(id + (u + v)r)K_{2} \\
= K_{2}K_{2} + (u-v)rK_{2}K_{2} + (u + v)K_{2}rK_{2} + (u + v)(u-v)rK_{2}rK_{2}.
\end{multline*}
Since $K$ is an involution, $K_{2}K_{2}r = r = rK_{2}K_{2}$. It remains to show that 
\[
rK_{2}rK_{2} = K_{2}rK_{2}r.
\]
We calculate the left hand side for $x,y\in X$: 
\begin{align*}
rK_{2}rK_{2}(x,y)&=rK_{2}r(x, g _{1} f_{1}(y) \oplus  g_{2}f_{2}(x)\oplus \ldots  \oplus g _{n}  f_{n}(y))\\
&=\oplus _{i=1}^{n} g_{i}rK_{2}r(x, f_{i}(y))\\
&=\oplus _{i=1}^{n} g_{i}rK_{2}(\sigma _{x}( f_{i}(y))\otimes \tau _{f_{i}(y)}(x))\\
&=\oplus _{j=1}^{n}\oplus _{i=1}^{n} g_{j}g_{i}(c_{i,j}\otimes d_{i,j}),
\end{align*}
where 
\[
c_{i,j}=\sigma _{a}(f_{j}(b)),
\quad 
d_{i,j}=\tau _{f_{j}(b)}(a),
\quad
a=\sigma _{x}( f_{i}(y)),
\quad
b=\tau _{f_{i}(y)}(x).
\]
Let $A^{1}$ be natural extension of the ring $A$ by an identity element, then \[d_{i,j}=(1+c_{i,j})^{-1}*a=(1+c_{i,j})^{-1}*\sigma _{x}( f_{i}(y))\] where $(1+c_{i,j})^{-1}*(1+c_{i,j})=1.$
 Observe that because each $f_{i}(x)$ is $\mathcal {G}(X,r)$- equivariant, we get 
\[c_{i,j}=\sigma _{a}(f_{j}(b))= f_{j}(\sigma _{a}(b))=f_{j}(x)\] since $r$ is involutive.  
 Therefore \[d_{i,j}=(1+c_{i,j})^{-1}*a=(1+c_{i,j})^{-1}*\sigma _{x}( f_{i}(y))=(1+c_{i,j})^{-1}*f_{i}(\sigma _{x}(y)).
 \]

We now calculate the left hand side of our equation for $x,y\in X$:
\begin{align*}
K_{2}rK_{2}r(x,y) &= K_{2}rK_{2}(\sigma _{x}(y), \tau _{y}(x))\\
&=\oplus _{j=1}^{n}g_{j}K_{2}r(\sigma _{x}(y), f_{j}(\tau _{y}(x))\\
&=\oplus _{i=1}^{n}\oplus _{j=1}^{n}g_{i}g_{j}( e_{i,j}\otimes q_{i,j}),
\end{align*}
where 
\[
e_{i,j}=\sigma _{p}(f_{j}(q)),
\quad
q_{i,j}=f_{i}(\tau_{f_{j}(q)}(p)),
\quad
p=\sigma _{x}(y),
\quad
q=\tau_{y}(x).
\]
Notice that  because $f_{j}$ is $\mathcal {G}(X,r)$-equivariant we get 
\[
e_{i,j}=\sigma _{p}(f_{j}(q))=f_{j}(\sigma _{p}(q))=f_{j}(x).
\]
It follows that $e_{i,j}=c_{i,j}$.

We will now calculate $q_{i,j}$: 
We know that $\tau_{f_{j}(q)}(p)=(e_{i,j}+1)^{-1}*p$ where $(e_{i,j}+1)^{-1}*(e_{i,j}+1)=1$. We now calculate 
\[
q_{i,j}=f_{i}(\tau_{g_{j}(q)}(p))=f_{i}((e_{i,j}+1)^{-1}*p)=(c_{i,j}+1)^{-1}*f_{i}(p).
\]
Since $p=\sigma _{x}(y)$, we get $q_{i,j}=d_{i,j}$ and the result follows. 
\end{proof}

\begin{rem}
We can guarantee that  the above map $k$ is involutive by assuming the appropriate relations on the underlying ring $A$  and on elements $f_{i}$. Notice that the matrix associated to the map $K:V\rightarrow V$ will usually have around $n$ nonzero entries in each column.
\end{rem}

\begin{rem}
Let $(X,r)$ and $k:V\rightarrow V$ be constructed as in Theorem 5.6, then $k$ is a solution to the set theoretic reflection equation 
\[K_{2}rK_{2}r=rK_{2}rK_{2}\] where $K_{2}=id \otimes k$.
\end{rem}

\begin{example}
Let $A$ be a nilpotent ring such that the sum of three copies of any element from this ring is zero. Let $c\in A$ be such that $c*c=0$ and 
\begin{align*}
& c_{1}=c, &&c_{2}=2c, && c_{3}=3c=0. 
\end{align*}
For each $i\in\{1,2,3\}$ let $f_{i}(x)=x*c_{i}+x$
and 
\[
L(x)=(2/3)f_1\oplus (2/3)f_2+(-1/3)f_3.
\]
A straightforward calculation shows that $L(L(x))=x$. Then $L$ is an involutive map which satisfies the assumptions 
of Theorem~\ref{thm:Agata_new}, so it gives a solution to a parameter dependent reflection equation.
\end{example}

\begin{rem}
In Theorem~\ref{thm:Agata_new} we can take functions $f_{i}: V\rightarrow V$  defined as follows: for a given $i$ let either $f_{i}(x)=x*c_{i}$ or $f_{i}(x)=x+x*c_{i}$  for $x\in X$ and then extend them by linearity to $V$. 
\end{rem}

\begin{thm}
\label{thm:Robert}
Let $(X,r)$ be an involutive non-degenerate solution to the YBE, let $V=\C X$ and 
denote by $r$ the linear extension of $r$ to $V\otimes V$.  
Let  $ k\colon X\rightarrow  X$ be a reflection of $(V,r)$ so $rK_{2}rK_{2}=K_{2}rK_{2}r$, where $K_{2}=\id\otimes k$. Assume that either $k^{2}=\id$ or $k^{2}=0$.
Denote $K_{2}(u)=\id \otimes K(u)$ and $R(u)=\id+u{r}$, where  \[K(u)=\id +uk.\]
  Then $K(u)$ satisfies the reflection equation  
  \[
  K_{2}(v)R(u + v)K_{2}(u)R(u-v) = R(u-v)K_{2}(u)R(u + v)K_{2}(v).
  \]
\end{thm}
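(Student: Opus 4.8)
The plan is to imitate the proof of Theorem~\ref{thm:parameter}, turning the desired identity into a polynomial identity in the two parameters $u$ and $v$ whose coefficients are words in $r$ and $K_2=\id\otimes k$. Writing $R(u)=\id+ur$ and $K_2(u)=\id+uK_2$, I would substitute these into both sides and multiply out, so that
\[
K_2(v)R(u+v)K_2(u)R(u-v)=(\id+vK_2)(\id+(u+v)r)(\id+uK_2)(\id+(u-v)r)
\]
and similarly for the right-hand side. Since $(X,r)$ is involutive, its linear extension to $V\otimes V$ satisfies $r^2=\id$, and I would use this to collapse every $r^2$ back to $\id$ as I expand. After this reduction each side is a combination, with polynomial coefficients in $u,v$, of the common words $\id,\ r,\ K_2,\ K_2r,\ rK_2,\ K_2^2,\ rK_2r,\ K_2rK_2$, together with two further words that differ between the sides: the left-hand side produces $K_2^2r$ and $K_2rK_2r$, whereas the right-hand side produces $rK_2^2$ and $rK_2rK_2$.

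The next step is a term-by-term comparison of the two expansions. A direct check shows that the coefficients of the eight common words agree on the two sides; for example both sides carry $2u\,r$ and $(u+v+u^2v-v^3)K_2$, the coefficient of $\id$ is $1+u^2-v^2$ on each side, and the coefficient of $K_2rK_2$ is $u^2v+uv^2$ on each side. The only top-degree word on the left is $K_2rK_2r$ and the only one on the right is $rK_2rK_2$, both appearing with coefficient $u^3v-uv^3$; these are equated by the standing hypothesis $rK_2rK_2=K_2rK_2r$ that $k$ is a reflection of $(V,r)$. After all of these cancellations, the difference between the two sides collapses to the single residual expression
\[
(u^2v-uv^2)\bigl(K_2^2r-rK_2^2\bigr).
\]

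The crux is therefore to show that $K_2^2$ commutes with $r$, and this is exactly where the dichotomy $k^2=\id$ or $k^2=0$ enters: since $K_2=\id\otimes k$ one has $K_2^2=\id\otimes k^2$, so the hypothesis forces $K_2^2=\id$ in the first case and $K_2^2=0$ in the second. In either case $K_2^2$ is a scalar multiple of the identity on $V\otimes V$ and hence commutes with $r$, so $K_2^2r-rK_2^2=0$ and the residual term vanishes, which completes the verification that the two expansions coincide. The only genuinely delicate part of the argument is the bookkeeping of the fourfold expansion; the algebraic input is minimal, consisting solely of $r^2=\id$, the reflection identity, and the observation that $K_2^2\in\{\id,0\}$ is central. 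I expect this centrality step to be the conceptual heart of the proof: it is precisely what distinguishes Theorem~\ref{thm:Robert} from Theorem~\ref{thm:parameter} and what makes the affine form $K(u)=\id+uk$ admit the nilpotent case $k^2=0$ in addition to the involutive case.
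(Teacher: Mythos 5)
Your proof is correct and follows essentially the same route as the paper: both arguments expand in powers of $u$ and $v$ and reduce everything to the three identities $r^{2}=\id$, $rK_{2}rK_{2}=K_{2}rK_{2}r$, and $K_{2}^{2}r=rK_{2}^{2}$, the last holding because $k^{2}\in\{\id,0\}$ forces $K_{2}^{2}$ to be $\id$ or $0$. The paper merely organizes the bookkeeping more economically, setting $C(u,v)=K_{2}(v)R(u+v)K_{2}(u)$ and verifying $C(u,v)-C(v,u)+(u-v)\bigl(C(u,v)r-rC(v,u)\bigr)=0$ instead of expanding all sixteen terms on each side as you do.
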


\begin{proof} 
Denote $C(u,v)=K_{2}(v)R(u+v)K_{2}(u)$. We have to show that 
\[C(u,v)R(u-v)=R(u-v)C(v,u).\]
   This is equivalent to show that 
\[C(u,v)-C(v,u)+(u-v)(C(v,u)r-rC(v,u))=0,\] since $R(u-v)=\id+(u-v)r$.

Because all maps are $\C$-linear we have $K_{2}(u)=\id+uK_{2}$. 
Notice that 
\begin{multline*}
    C(u,v)=v(u+v)K_{2}r+u(u+v)rK_{2}\\
+\id+(u+v)(K_{2}+r)+uv(u+v)K_{2}rK_{2}+uvK_{2}^{2}.
\end{multline*}
Therefore 
\[
C(u,v)-C(v,u)=(v^{2}-u^{2})(K_{2}r-rK_{2}).
\]

A straightforward calculation shows that 
\[
C(u,v)r-rC(v,u)=
(u+v)(K_{2}r-rK_{2})
\]
since by assumption $rK_{2}rK_{2}=K_{2}rK_{2}r$ and $K_{2}^{2}r=rK_{2}^{2}$.
Therefore \[
C(u,v)-C(v,u)+(u-v)(C(v,u)r-rC(v,u))=0,
\]
as required.
\end{proof}
%

\begin{rem}
Notice that conditions from Theorem~\ref{thm:Robert} include conditions $K(\lambda )K(-\lambda ) = \id$ for each $\lambda \in C$, and $K(0) =\id$, from~\cite{MR1965153} provided that $k^{2}=0$. On the other hand if
 $k^{2}=\id$ then we can define $K(u)=\frac {\id+ku}{\sqrt {1-u^{2}}}$ for $u\neq 1, -1$ and the conditions from~\cite{MR1965153} are satisfied.
\end{rem}

Notice that Theorem~\ref{thm:Agata_new} also holds if we assume that $k^{2}=0$ instead of the assumption $k^{2}=\id$ and it gives a rich source of maps $k$ which could be used in Theorem~\ref{thm:Robert}.

\section*{Acknowledgements}
Smoktunowicz is supported by 
ERC Advanced grant 320974 and  
EPSRC Programme Grant EP/R034826/1. 
Vendramin acknowledges the support of NYU-ECNU Institute of Mathematical Sciences at NYU Shanghai, UBACyT 20020171000256BA and PICT-2481-0147. 
Weston acknowledges support from the EPSRC grant: EP/R009465/1 and would like
to thank Anastasia Doikou and Bart Vlaar for useful discussions.

\bibliographystyle{abbrv}
\bibliography{refs}

\end{document}